\pgfplotsset{compat=1.14}
\DeclareMathOperator*{\argmax}{argmax}
\newcommand{\prox}{\mathop{\mathrm{prox}}\nolimits}
\newcommand{\Sum}{\sum\limits_{i=1}^M}
\newcommand{\Sumj}{\sum\limits_{j=1}^M}
\newcommand{\modif}[1]{{#1}}
\newcommand{\modifTwo}[1]{{#1}}
\numberwithin{theorem}{section}
\numberwithin{coro}{section}
\newcommand{\TheTitle}{A Distributed Flexible Delay-tolerant Proximal Gradient Algorithm } 
\newcommand{\TheAuthors}{K. Mishchenko, F. Iutzeler, and J. Malick}
\headers{\TheTitle}{\TheAuthors}
\title{{\TheTitle}\thanks{Submitted to the editors \today.}}
\author{
  Konstantin Mishchenko\thanks{KAUST
    (\email{konstantin.mishchenko@kaust.edu.sa}, \url{http://konstmish.github.io/}).}
  \and
  Franck Iutzeler\thanks{Laboratoire Jean Kuntzmann, Univ. Grenoble Alpes (\email{franck.iutzeler@univ-grenoble-alpes.fr}).}
  \and
  J\'er\^{o}me Malick\thanks{CNRS, Laboratoire Jean Kuntzmann  (\email{jerome.malick@univ-grenoble-alpes.fr}).}
}
\begin{document}

\maketitle

\begin{abstract}
 We develop and analyze an asynchronous algorithm for distributed convex optimization when the objective can be written as a sum of smooth functions, local to each worker, and a non-smooth function. Unlike many existing methods, our distributed algorithm is adjustable to various levels of communication cost, delays, machines' computational power, and functions' smoothness. A unique feature is that the stepsizes do not depend on communication delays nor number of machines, which is highly desirable for scalability. We prove that the algorithm converges linearly in the strongly convex case, and provide guarantees of convergence for the non-strongly convex case. The obtained rates are the same as the vanilla proximal gradient algorithm over some introduced epoch sequence that subsumes the delays of the system. We provide numerical results on large-scale machine learning problems to demonstrate the merits of the proposed method.
\end{abstract}



  


\section{Introduction}

A broad range of problems arising in machine learning and signal processing can be formulated as minimizing the sum of $M$ smooth functions\;$(f_i)$ and a non-smooth proximable function $g$
\begin{align}
    \label{eq:pb}
    \min_{x\in\mathbb{R}^n}~~ \frac{1}{M}\sum_{i=1}^M f_i(x) 
    ~+~ g(x).
\end{align}
For instance, $f_i$ may represent a local loss and $g$ a non-smooth regularizer that imposes some structure on optimal solutions. Typical examples include the $\ell_1$-regularized regression \cite{tibshirani1996regression} in which $g$ is taken as the $\ell_1$-norm \cite{bach2012optimization}. 

\subsection{Distributed setting}

In this paper\footnote{Our preliminary work in a machine learning context 
\cite{icml2018} presents briefly the asynchronous framework and a theoretical study in the strongly convex case. We extend this work on several aspects with in particular a deeper analysis of the asynchronous setting, the use of local stepsizes, and the study of the general convex case.}, we consider the optimization problem~\eqref{eq:pb} in a distributed setting with $M$ worker machines, where worker $i$ has private information on the smooth function $f_i$. 
More precisely, we assume that each worker $i$ can compute:
\begin{itemize}
    \item the gradient of its local function $\nabla f_i$;
    \item the proximity operator of the common non-smooth function $\prox_g$.
\end{itemize}
We further consider a master slave framework where the workers exchange information with a master machine which has no global information about the problem but only coordinates the computation of agents in order to minimize~\eqref{eq:pb}. 
\modif{Having asynchronous exchanges between the workers and the master is of paramount importance for practical efficiency; indeed, asynchronous algorithms can perform more iterations per second for nearly the same improvement as their synchronous counterparts even with large delays (see e.g.\;the asynchronous parallel fixed point algorithm of\;\cite{hannah2017more}). In the considered setup, as soon as the master receives an update from a worker, it updates the master variable, and sends it to this worker which then carries on its computation.}

This distributed setting covers a variety of scenarios when computation are scattered over distributed devices (computer clusters, mobiles), each having a local part of the data (the locality arising from the prohibitive size of the data, or its privacy\;\cite{shokri2015privacy}), as in federated learning\;\cite{konevcny2016federated}. 
In the large-scale machine learning applications for instance, data points can be split across the $M$ workers, so that each worker $i$ has a local loss function $f_i$ with properties which may be different due to data distribution unevenness. 

\modif{We focus on the setup where (i) the workers' functions differ in their values and the computational complexity of their local oracles (e.g. due to non-i.i.d. unbalanced local datasets in a learning scenario); (ii) the communications between workers and the master are time-consuming (e.g. due to scarce availability or slow communications). This implies that we need to pay a special attention to the delays of workers' updates.}

\subsection{Contributions and outline}

In this distributed setting, we provide an asynchronous algorithm and the associated analysis that adapts to local functions' parameters and can handle any kind of delays. The algorithm is based on totally asynchronous proximal gradient iterations with different stepsizes, which makes it adaptive to the individual functions' properties. In order to subsume delays, we develop a new epoch-based mathematical analysis, encompassing computation times and communication delays, to refocus the theory on algorithmics. \modifTwo{We show convergence in the general convex case and linear convergence in the strongly convex case. More precisely, we show that the proposed method verifies a decrease depending only on the problem parameters (strong convexity and smoothness constants) over meta-iterations, called \emph{epochs}, that subsume the delays between the different workers. This approach thus decouples the convergence between the problem parameters and the delays brought by asynchrony.} The algorithm thus handles the diversity of the previously-discussed applications.

The paper is organized as follows.
In Section~\ref{sec:algo}, we give a description of the algorithm, split into the communication and the optimization scheme, as well as a comparison with the 
most related literature. In Section~\ref{sec:th}, we develop our epoch-based analysis of convergence, separating the general and the strongly convex case.
In Section~\ref{sec:experiments}, we provide illustrative computational experiments on standard $\ell_1$-regularized problems showing the efficiency of the algorithm and its resilience to delays.

\subsection{Related work}\label{sec:existing}

Most existing methods for solving problem~\eqref{eq:pb} in the considered context are based either on parallel stochastic algorithms or on distributed extensions of standard algorithms.

Stochastic algorithms have received a lot of attention, regarding convergence rates, acceleration, parallelization, generalization to non-smooth or sparse gradient cases; see e.g.\;\cite{shalev2013accelerated, johnson2013accelerating, defazio2014saga}.
Parrallel versions of stochastic algorithms have also been proposed where subparts of the data are stored in different machines (Hogwild! \cite{recht2011hogwild}, Distributed SDCA \cite{takavc2015distributed}, Distributed SVRG \cite{lee2015distributed}, ProxASAGA \cite{pedregosa2017breaking}). 
Despite their theoretical properties and practical success in the context of multicore computers, these algorithms are not well-suited for our distributed setting where we focus not only on the number of data accesses, but also on the number of communication steps (see e.g.~\cite{Ma2017arbitrary}). For example, ASAGA \cite{pedregosa2017breaking} makes computations in parallel but does not fit our framework, as it assumes uniform sampling with shared memory between computing parties. Thus, a naive extension of such parallel stochastic methods would be inefficient in practice due to large overheads in communications.

There also exists a rich literature on distributed optimization algorithms with no shared memory. We mention e.g.\;ARock\;\cite{peng2016arock}, Asynchronous ADMM~\cite{zhang2014asynchronous}, COCOA \cite{ma2015adding}, Delayed Proximal Gradient algorithms \cite{aytekin2016analysis,vanli2016stronger}, or dSAGA\;\cite{calauzenes2017distributed}. These methods often have restrictive assumptions about synchrony of communications, or boundedness of the delays between fastest and slowest machines. For instance, the asynchronous distributed ADMM of \cite{zhang2014asynchronous} allows asynchronous updates only until a maximal delay, after which every worker has to wait for the slowest\;one.

Usually, the bounds on delays also impact the stepsizes in algorithms and the convergence rates, as in \cite{peng2016arock,aytekin2016analysis}.
The only other work establishing convergence with unbounded delays is \cite{sun2017asynchronous, hannah2016unbounded} for asynchronous coordinate descent methods (but with decreasing stepsizes).
\modif{\modifTwo{In contrast with all existing literature, we propose a totally asynchronous algorithm that does not require any knowledge about the delays: in practice, delays impact the observed convergence but i) the choice of the stepsize is delay-independent; and ii) our convergence analysis relies on the notion of \emph{epochs} subsuming the delays produced by asynchrony (over these epochs the rate only depends on the problem parameters).}

\modifTwo{Let us finally point out the main improvements over the companion conference paper \cite{icml2018} which presents briefly the  asynchronous framework and a theoretical study in the strongly convex case. In the present paper, we provide a pedagogical study of the mechanisms at play (in Section~\ref{sec:algo}) and a refined analysis covering the non-strongly convex case (in Section~\ref{sec:th} and the appendices).} For a better understanding, we add several illustrative figures explaining the algorithms, the proof techniques, and toy examples. The experiments provided here are complementary to those presented in \cite{icml2018}; in particular, we illustrate the behavior of the algorithm for non-strongly convex objectives.
}

\section{\texttt{DAve-RPG}: Distributed Averaging of Repeated Proximal Gradient}\label{sec:algo}

In this section, we present the proposed \texttt{DAve-RPG} algorithm, where \texttt{DAve} stands for the global communication scheme based on distributed averaging of iterates, and \texttt{RPG} stands for the local optimization scheme, based on repeated proximal-gradient steps. We start by presenting the generic master slave setting and associated notations.

\subsection{Asynchronous Master Slave Framework}
\label{sec:framework}

We consider the master slave model: in order to reach the global objective~\eqref{eq:pb}, the workers exchange information with a master machine. In view of practical efficiency (see e.g. the recent \cite{hannah2017more}), these exchanges are asynchronous: at each moment when the master receives an update from a worker, it revises its master variable and sends it back to the sender.

In compliance with this asynchronous setting, we call iteration/time $k$ (denoted by a superscript $k$), the moment of the $k$-th exchange between a worker and the master, or, equivalently, the $k$-th time the master has updated its master variable.
For a worker $i$ and a time $k$, we denote by $d_i^k$ the delay for $i$ at time $k$, i.e.\;the number of master updates since worker $i$ conversed with the master. More precisely, at time\;$k$, the updating worker $i=i(k)$ suffers no delay (in terms of update in the master variable), i.e.\;$d_i^k=0$, while the delays of the other workers are incremented ($d_j^k = d_j^{k-1}+1 \text{ for all } j\neq i(k)$). 
In addition, we denote by $D_i^k$ the relative delay from the penultimate update, mathematically defined as $D_j^k = d_j^k + d_j^{k-d_j^k-1}+1$ for worker $j$ and time $k$. This asynchronous distributed setup and the corresponding definitions are illustrated in Fig.\;\ref{fig:scheme}.

\begin{figure}
    \centering

\tikzstyle{mybox} = [draw=black , thick, rectangle, rounded corners , inner sep = 10pt]
\tikzstyle{mastertitle} =[fill=red!80!yellow, text=white]
\tikzstyle{slavetitle} =[fill=blue!70!black, text=white]
\tikzstyle{slavetitle2} =[fill=black!50!white, text=white]

\resizebox {\columnwidth} {!} {
\begin{tikzpicture}

\node at (5,3) [mybox] (boxM){%
    \begin{minipage}{70pt}
      $  {\color{red!80!yellow} \overline x^k } = \overline x^{k-1} + {\color{blue!70!black} \Delta}  $
    \end{minipage}
};
\node[mastertitle] at (boxM.north) {\bfseries Master};

\node at (0,0) [mybox] (boxS1){%
    \begin{minipage}{70pt}
    \centering
   \Large
   {\small $(\nabla f_1, \mathrm{prox}_{g})   $}\\
   { \includegraphics[width=10pt]{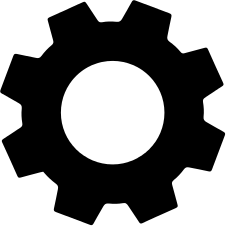}$_1$   }
    \end{minipage}
};
\node[slavetitle2] at (boxS1.north) {\bfseries Worker $1$};

\node at (2.5,0) {\Large ...};

\node at (5,0) [mybox] (boxS2){%
    \begin{minipage}{70pt}
    \centering
   \Large
   {\small $(\nabla f_i, \mathrm{prox}_{g})   $}\\
   {\color{blue!70!black}  \includegraphics[width=10pt]{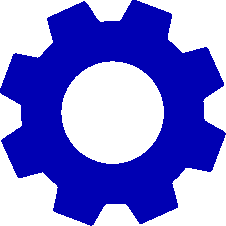}$_i$   $\rightarrow \Delta$ }
    \end{minipage}
};
\node[slavetitle] (ST) at (boxS2.north) {\bfseries Worker $i$};

\node at (7.5,0) {\Large ...};

\node at (10,0) [mybox] (boxS3){%
    \begin{minipage}{70pt}
    \centering
   \Large
   {\small $(\nabla f_M, \mathrm{prox}_{g})   $}\\
   {  \includegraphics[width=10pt]{Figs/gear.png}$_M$   }
    \end{minipage}
};
\node[slavetitle2] at (boxS3.north) {\bfseries Worker $M$};

\draw [->,>=stealth,blue!70!black, thick] (4.8,1.15) to [out=130,in=-130] (4.8,2.35) ;
\node at (4.35,1.75)  {\color{blue!70!black}   $ \Delta$ };

\draw [<-,>=stealth,red!80!yellow, thick] (5.2,1.15) to [out=50,in=-50] (5.2,2.35);
\node at (5.68,1.79)  {\color{red!80!yellow}   $ \overline x^k $ };

\node[scale=0.6] at (5,1.75)  {  $ i=i(k) $ };

\end{tikzpicture}}

\vspace*{0.5cm}

\modifTwo{
\resizebox {\columnwidth} {!} {
     \begin{tikzpicture}
    \draw[thick, ->] (0,0) -- (10,0) node [below] {time};
    \foreach \x in {1,...,19}
    \draw (\x/2, 0.1) -- node[pos=0.5] (point\x) {} (\x/2.0, -0.1);

    \node[scale=1.0,  anchor = east] at (-0.5,0) {updating worker $i=i(k)$};
    
    \node[draw,blue!70!black,ellipse,fill=white,thick, scale = 1.0]  (B) at (7.5,0) {$i$};
    \node[draw,circle,fill=red!80!yellow, scale = 0.8]  at (7.5,0) {};
    \node[scale=0.8] at (7.5,-0.5) {$k = k-d_i^k$};
    \node[draw,blue!70!black,ellipse,fill=white,thick, scale = 1.0] (B) at (5,0) {$i$};
    \node[scale=0.8] at (5,-0.5) {$k-D_i^k$};
    \node[draw,blue!70!black,ellipse,fill=white,thick, scale = 1.0]  (B) at (2.5,0) {$i$};
    \node[draw,blue!70!black,ellipse,fill=white,thick, scale = 1.0]  (B) at (1.5,0) {$i$};
    
    \draw[thick, ->] (0,-1) -- (10,-1) node [below] {time};
    \foreach \x in {1,...,19}
    \draw (\x/2, -1.1) -- node[pos=0.5] (point\x) {} (\x/2.0, -0.9);
    
\node[scale=1.0, anchor = east] at (-0.5,-1) {other worker $j\neq i(k)$};
    
    \node[draw,circle,fill=red!80!yellow, scale = 0.8]  at (7.5,-1) {};
    \node[scale=0.8] at (7.5,-1.5) {$k$};
    \node[draw,white!50!black,ellipse,fill=white,thick, scale = 1.0] (B) at (6.0,-1) {$j$};
    \node[scale=0.8] at (6,-1.5) {$k-d_j^k$};
    \node[draw,white!50!black,ellipse,fill=white,thick, scale = 1.0] (B) at (3.5,-1) {$j$};
    \node[scale=0.8] at (3.5,-1.5) {$k-D_j^k$};
   \node[draw,white!50!black,ellipse,fill=white,thick, scale = 1.0] (B) at (2.0,-1) {$j$};
  \node[draw,white!50!black,ellipse,fill=white,thick, scale = 1.0] (B) at (0.5,-1) {$j$};
    
  \end{tikzpicture}}
  }
  
      \caption{Asynchronous distributed setting and delays notations at iteration $k$.}
    \label{fig:scheme}
  
  \end{figure}

\modifTwo{A key point in this work is that we do \emph{not} assume that the delays are uniformly bounded. We prove instead the convergence of our algorithm and the associated rates using a companion sequence that subsumes the delays.} This places this work in the \emph{totally asynchronous} setting according to Bertsekas and Tsitsiklis' classification \cite[Chap.\;6.1]{bertsekas1989parallel}. Nevertheless, for clarity, we will also provide convergence rates in the partially asynchronous setting with uniform delay boundedness  ($d_i^k\le d$ for all $i,k$) or average delay boundedness ($1/M\sum_{i=1}^M d_i^k \leq (M-1)/2 + d $ for all $k$) in Section~\ref{sec:comp_delay}.

\subsection{\texttt{DAve} Communication scheme}
\label{sec:dave_com}

Our communication scheme is based on maintaining at the master the weighted average of the most updated parameters of the workers. At time $k$, worker $i=i(k)$ finishes the computation of a new \emph{local parameter} $x_i^k$ and the corresponding \emph{adjustment} $\Delta$ corresponding to the weighted difference between its new and former local parameter. As soon as the computation is finished, this adjustment is sent to the master node which, in turn, adds it to its \emph{master parameter} $\overline{x}^k$. The master then immediately sends back this parameter to worker $i$, which can begin a new computation step. 
\modif{During the updates, the master variable is ``locked'' (see
e.g. the description of \;\cite{peng2016arock}), guaranteeing read/write consistency.}

Mathematically, at each time $k$, one has 
\begin{align}\label{eq:prox_average}
    \overline x^k &= \overline x^{k-1} + \Delta \text{  with } \Delta=\pi_i(x_i^k - x_i^{k-D_i^k}) \text{ for } i=i(k)\\
   \text{thus, \quad } \overline x^k &= \Sum \pi_i x_i^{k-d_i^k} =  \Sum  \pi_i \includegraphics[width=10pt]{Figs/gear.png}_i ( \overline x^{k-D_i^k}) \label{eq:prox_average2}
\end{align}
where $\includegraphics[width=10pt]{Figs/gear.png}_i$ represents the computation of worker $i$ (see Figure \ref{fig:scheme}) and the $(\pi_i)_{i=1,..,M}$ are the weights of the workers contributions. \modif{These weights are positive real numbers such that $\sum_{i=1}^M \pi_i = 1$ and are kept fixed over time; their values are derived from  the optimality conditions of~\eqref{eq:pb} and the workers computation. In this paper, the agents will perform (proximal) gradient steps ($\includegraphics[width=10pt]{Figs/gear.png}_i$ = proximal gradient step on $f_i+g$) which leads to the weights given by\;\eqref{eq:pi}.} 

\modif{We see in Eq.~\eqref{eq:prox_average2} that $\overline x^k$ depends on local parameters $(x_i^{k-d_i^k})_i$, which themselves were computed using (once more delayed) global parameters $(\overline x^{k-D_i^k})_i$. A unique feature\footnote{We note that this idea of averaging iterates has also been used in the different context: for variance reduction in incremental methods \cite{defazio2014finito, mokhtari2016surpassing}.} of our distributed algorithm is that at each time, the master variable is as a weighted average of the agents' last contributions. This means that the contribution of each worker in the master variable stays fixed over time even if one worker updates much more frequently than the others. \modifTwo{Though this simple idea might be counterproductive in other contexts, it allows here }the algorithm to cope with heterogeneity in the computing system such as data distribution and agents delays. Roughly speaking, in standard approaches, if an agent has very outdated information, the output of its computation can lead to a counter-productive change, 
generating instability in the algorithm; keeping a fixed average of the contributions offers a counterbalance to such drastic updates. This phenomenon is illustrated in the case where the agents computations are gradients steps in Section~\ref{sec:piag}, notably through Figures~\ref{fig:averaging} and \ref{fig:2d_examples}. }

\subsection{Optimization scheme: Repeated Proximal Gradient \texttt{RPG}}
As the problem features a smooth and a non-smooth part, it is natural that the workers use proximal gradient steps. Furthermore, we allow the repetition of local proximal gradient steps before exchanging with the master, for higher flexibility in the computing time between two exchanges. We present our \texttt{RPG} scheme in 3 stages, explaining the three letters of the name. For more readability, we consider a generic worker $i$ and time $k$ when $i=i(k)$ is the exchanging worker (as represented in Figure\;\ref{fig:scheme}).

$\blacklozenge$ \textbf{G.} If $g\equiv 0$, then each worker may perform a simple gradient step on the last master parameter received $\overline{x}^{k-D_i^k}$:
\begin{equation}
\label{eq:simple_gear}
    x_i^k \leftarrow \overline x^{k - D_i^k} - \gamma_i \nabla f_i(\overline x^{k - D_i^k}),~~~~~~~
    \Delta \leftarrow \pi_i\left(x_i^k - x_i^{k - D_i^k}\right)
\end{equation}
where  $\gamma_i$ is the \emph{local stepsize} at worker $i$ (related only to function $f_i$) and  
\begin{align}
\label{eq:pi}
    \pi_i:= \frac{\frac{1}{\gamma_i}}{\sum_{j=1}^M\frac{1}{\gamma_j}}
\end{align}
is the \emph{proportion} of worker $i$'s contribution, necessary to converge to the correct point.

$\blacklozenge$  \textbf{PG.} For a general non-smooth convex function $g$, we consider the proximity operator, defined for any $\gamma>0$ by
\begin{align*}
    \prox_{\gamma g}(x) = \arg\min_z \left\{ g(z) + \frac{1}{2\gamma} \left\|z - x \right\|^2 \right\}.
\end{align*}
One can extend \eqref{eq:simple_gear} in the same way iteration $\prox_{\gamma g}(x - \gamma\nabla f(x))$ generalizes a gradient step. However, contrary to direct intuition, the proximity operator has to be computed first, leading to a temporary variable $z$, on which is taken the gradient step before exchanging: 
\begin{equation}
\label{eq:advanced_gear}
    z\leftarrow \prox_{\gamma g}(\overline x^{k - D_i^k}),~~~~
    x_i^k \leftarrow z - \gamma_i\nabla f_i(z),
    ~~~~
    \Delta \leftarrow \pi_i\left(x_i^k - x_i^{k - D_i^k}\right)
\end{equation}
with $\gamma$ being the \emph{master stepsize} appearing in \emph{all} proximity operators:
\begin{align}
\label{eq:step}
    \gamma := \frac{M}{\sum_{i=1}^M \frac{1}{\gamma_i}}
\end{align}
equal to the harmonic average of the local stepsizes. Note that our algorithm allows for different local stepsizes, which simplifies parameters tuning as it can be done locally. Then, the proximity operators have to be taken with a separate master stepsize. 


$\blacklozenge$  \textbf{RPG.} Once all computations of iteration \eqref{eq:advanced_gear} are done, the slave could send the adjustment $\Delta$ to the master and get $\overline x^k$ in response. However, the difference between the latest $\overline x^k$ and $\overline x^{k - D_i^k}$ may be small, so the worker would only gain little information from a new exchange. Thus, instead of communicating right away, we suggest to perform additional proximal gradient updates by taking as the starting point  $\overline x^{k - D_i^k} + \Delta$.
The motivation behind this repetition is to lower the burden of communications and to focus on computing good updates.
We will prove later that there is no restriction on the number of repetitions (called $p$ in the algorithm), as any value can be chosen and it can vary freely both across machines and over time.

\tcbset{width=1.0\columnwidth,before=,after=, colframe=black,colback=white, fonttitle=\bfseries, coltitle=white, colbacktitle=red!80!yellow, boxrule=0.2mm}
\begin{algorithm}[H]
\caption*{\texttt{\bfseries DAve-RPG}\label{alg:dave_rpg}}
\centering
\begin{multicols}{2}
\begin{tcolorbox}[title=Master:]
Initialize $\overline x = \overline x^0$, $k=0$\\
\While{not converge}{
    \textbf{when} a worker finishes:\\
    {\color{blue!70!black}Receive adjustment $\Delta$ from it}\\
    $\overline{x} \leftarrow \overline{x} + \Delta$\\
    {\color{red!80!yellow} Send $\overline{x}$ to the agent in return}\\
    $k\leftarrow k+1$
}
Interrupt all slaves\\
\textbf{Output} $ x = \prox_{\gamma g}\left(\overline x\right)$
\end{tcolorbox}

\columnbreak
\tcbset{width=1.03\columnwidth,before=\hspace{-0.2cm}, colframe=black!50!black, colbacktitle=blue!70!black}
\begin{tcolorbox}[title=Slave $i$:]
Initialize $x = x_i = \overline x$, \\
\While{not interrupted by master}{
    {\color{red!80!yellow}Receive the most recent $\overline{x}$}\\
    Select a number of repetitions $p$\\
    $\Delta \leftarrow 0$\\
    \For{$q= 1$ \KwTo p}{
        $z \leftarrow \prox_{\gamma g} (\overline{x} + \Delta)$\\
        $x^+ \leftarrow z - \gamma_i  \nabla f_i(z)$\\
        $\Delta \leftarrow \Delta + \pi_i\left(x^+ - x\right)$\\
        $x \leftarrow x^+$
    }
    {\color{blue!70!black}Send adjustment $\Delta$ to master}
}
\end{tcolorbox}
\end{multicols}
\end{algorithm}

\subsection{Comparison between DAve-(R)PG and PIAG}
\label{sec:piag}

Our algorithm DAve-RPG performs a distributed minimization of the composite problem \eqref{eq:pb} by aggregating the agents contributions. It is closely related to the proximal incremental aggregated gradient (PIAG) method \cite{aytekin2016analysis,vanli2016global}.
We can compare the update of PIAG with the one of $x^k = \prox_{\gamma g}(\overline x^k)$ for DAve-PG\footnote{\modifTwo{For the master, the iteration $k$ reads $x^k = \mathrm{prox}_{\gamma g}(\overline x^k)$ where $\overline x^k$ is the average of the last update of each worker: $\overline x^k = \sum_{i=1}^M \pi_i x_i^k$ (see Eq.~\eqref{eq:prox_average2}). For each worker $i$, $x_i^k$ is the result of the last gradient step performed by this worker on its local function: $x_i^k = x^{k-D_i^k} - \gamma_i \nabla f_i (x^{k-D_i^k}) $ (see Eq.~\eqref{eq:simple_gear}). Putting it all together, we get   $x^k =  \mathrm{prox}_{\gamma g}(  \sum_{i=1}^M  \pi_i x^{k - D_i^k} -   \sum_{i=1}^M   \pi_i \gamma_i\nabla f_i(  x^{k - D_i^k}))$. Finally, this expression can be simplified by noticing that $\pi_i \gamma_i = \gamma/M$ (see Eqs.~\eqref{eq:pi} and \eqref{eq:step}). }} (with one repetition, $p=1$).\\

\noindent\resizebox{\columnwidth}{!}{
\begin{tabular}{c|c}
 DAve-PG     &  PIAG \\
\hline
 & \\
$  x^k =  \prox_{\gamma g}\left(  \Sum  \pi_i x^{k - D_i^k} -  \Sum   \pi_i \gamma_i\nabla f_i(  x^{k - D_i^k})\right)
$
   & $  x^k =   \prox_{\gamma g}\left( x^{k-1} - \gamma \frac{1}{M} \Sum   \nabla f_i( x^{k - D_i^k})\right) $\\
 \hspace*{0.0cm} $   =  \prox_{\gamma g}\left(  \Sum  \pi_i x^{k - D_i^k} -  \gamma\frac{1}{M} \Sum   \nabla f_i( x^{k - D_i^k})\right) $ &
\end{tabular}
}

\vspace*{0.3cm}

These two algorithms are separated by a major difference: PIAG performs an aggregated delayed gradient descent from the most recent main variable $x^{k-1}$ and uses all gradients regardless of corresponding delays. Clearly, if one gradient has not been updated for long time, this update rule may be harmful\modif{, as mentioned at the end of Section~\ref{sec:dave_com}}. On the other hand, DAve-(R)PG performs a similar aggregated delayed gradient descent (with more adaptive local stepsizes) but from the averaged main point $\sum_{i=1}^M  \pi_i \overline x^{k - D_i^k} $. This more conservative update prevents instabilities in the case where some worker is silent for too long, and, thus, is more robust. See Figure~\ref{fig:averaging} for a geometrical illustration.

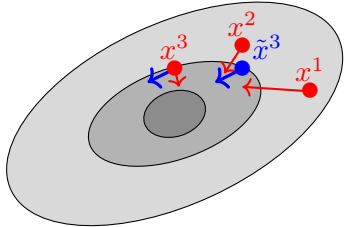
\begin{figure}
    \centering

      \begin{tikzpicture}[scale=0.6]
      
      \coordinate (P1) at (1.5,1.5);
      \coordinate (P2) at (0,1);
      \coordinate (P3) at (3,0.5);
      \coordinate (PM) at ($0.333*(P1) + 0.333*(P2) + 0.333*(P3) $);

      \coordinate (G1) at (-0.4,-0.6);
      \coordinate (G2) at (0.1,-0.4);
      \coordinate (G3) at (-1.5,0.1);
      \coordinate (GM) at ($0.333*(G1) + 0.333*(G2) + 0.333*(G3) $);
       
      \filldraw[draw=black,fill=gray!30,rotate=25] (0,0) ellipse (4 and 2);  
      \filldraw[draw=black,fill=gray!60,rotate=20] (0,0) ellipse (2 and 1);  
      \filldraw[draw=black,fill=gray!90,rotate=18] (0,0) ellipse (0.7 and 0.5);

      \draw[->,thick,color=red] (P1) -- ++(G1);
      \draw[->,thick,color=red] (P2) -- ++(G2);
      \draw[->,thick,color=red] (P3) -- ++(G3);
      \draw[->,very thick,color=blue] (PM) -- ++(GM);
      \draw[->,very thick,color=blue] (P2) -- ++(GM);

      \node[color=red,scale=1.5] at (P1) {\textbullet};
      \node[color=red,scale=1.1,above] at (P1) {$x^2$};
      \node[color=red,scale=1.5] at (P2) {\textbullet};
      \node[color=red,scale=1.1,above] at (P2) {$x^3$};
      \node[color=red,scale=1.5] at (P3) {\textbullet};
      \node[color=red,scale=1.1,above] at (P3) {$x^1$};
      \node[color=blue,scale=1.5] at (PM) {\textbullet};
      \node[color=blue,scale=1.1,above right] at (PM) {$\tilde{x}^3$};

\end{tikzpicture}
    \caption{Let the gray ellipses be the level-sets of a smooth function. In red are represented three iterates $(x^k)_{k=1,2,3}$ and their associated descent directions (taken as the opposite of the gradients computed at these points). The blue dots represent the averaged point $\tilde{x}^3 = (x^1 +  x^2 + x^3)/3 $, while the blue vectors both represent the average of the associated descent directions. We notice that in that situation, descending along the averaged gradient is much more interesting from the averaged point $\tilde{x}^3$ than from the last point $ x^3$. }
    \label{fig:averaging}
\end{figure}

In terms of theoretical results, this conservative approach allows us to get stronger convergence results and better rates as derived in the next section:
\begin{itemize}
    \item the stepsize of PIAG, and, thus, its rate, depends heavily on the maximal delays whereas our stepsize does not depend on any form of delays;
    \item PIAG's stepsize is global and, thus, cannot adapt to each of the workers local functions, while we use locally adapted stepsizes;
    \item no version of PIAG exists with \textit{multiple} proximal gradient steps before exchanging with the master.
\end{itemize}

\smallskip

In terms of performance, before more thorough comparisons, Fig.~\ref{fig:2d_examples} gives an illustration of the benefits of the proposed approach compared to PIAG in terms of iterates behavior. In this plot, we consider two runs of DAve-RPG and PIAG applied to a two dimensional problem where one of the 5 functions/workers takes $10$ times as much time to compute its update as the other workers and consequently produces more delayed updates. The objective used is a sum of 5 quadratics centered around different points and the initial point is (-20, -20) in all cases. Although the stepsize used for PIAG was 10 times smaller (due to its dependence to the delays), the iterates produced by PIAG show chaotic deviations from the optimal point while DAve-RPG steadily converges to the optimum.

\begin{figure}[h!]
\begin{center}
    \includegraphics[scale=0.4]{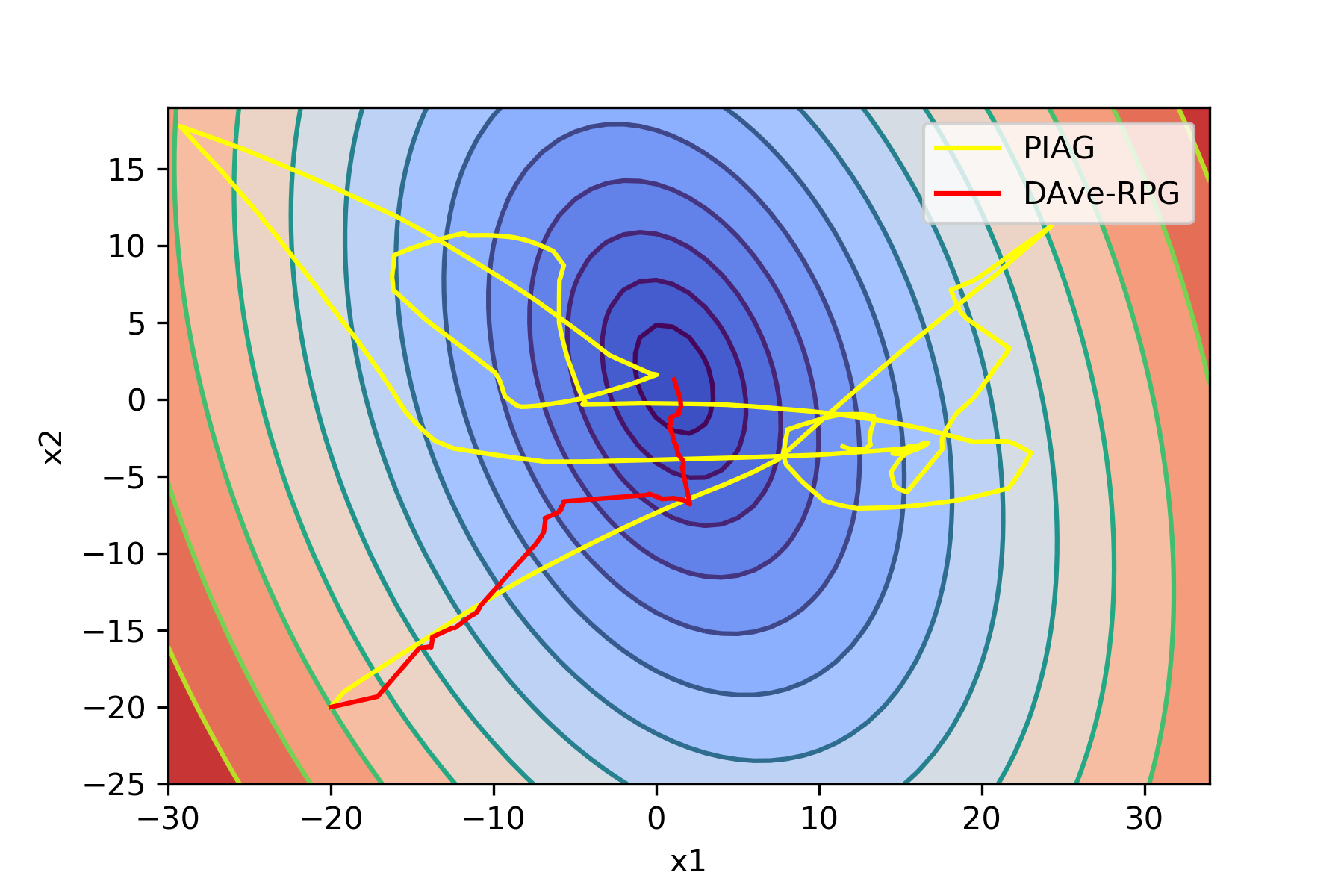}
    \includegraphics[scale=0.4]{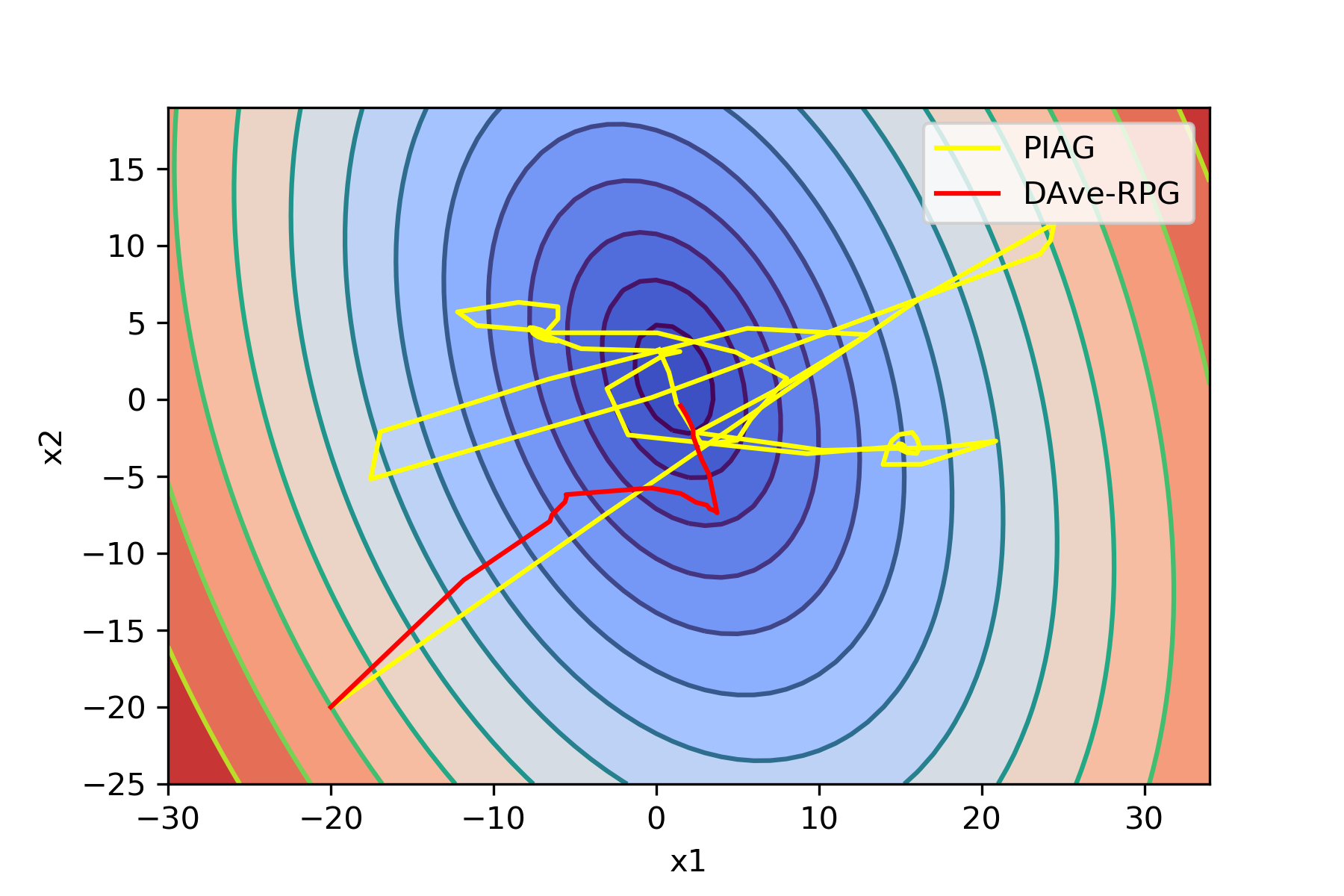}
\end{center}
\caption{Two runs of a two dimensional example with $n=5$ and one worker suffering long delays.}
\label{fig:2d_examples}
\end{figure}

\section{Analysis}
\label{sec:th}

\subsection{Revisiting the clock}\label{sec:epoch}

To the best of our knowledge, all papers on asynchronous distributed methods (except \cite{sun2017asynchronous, hannah2017more, hannah2016unbounded}) assume that delays are uniformly upper bounded by a constant. Moreover, the maximum stepsize is usually highly dependent on this upper bound. In the upcoming results, we show that our algorithm DAve-RPG converges without assuming bounded delays and with the stepsizes depending only on local smoothness and convexity of the functions.

The forthcoming results are based on the careful definition of an \emph{epoch sequence} along which we investigate the improvement of our algorithm (rather than looking at the improvement per iteration).

We define our \emph{epochs sequence} $\{k_m\}_m$ by setting $k_0=0$ and the recursion:
\begin{align*} 
k_{m+1}&=\min\{k:\text{ each machine made at least 2 updates on the interval }[k_m, k]\}\\
&= \min\{k: k-D_i^k \geq k_m \text{ for all } i=1,..,M  \}
\end{align*}

\modif{
\begin{figure}[!ht]
    \centering
    \resizebox {\columnwidth} {!} {
     \begin{tikzpicture}

    \draw[thick] (0,2) -- (7,2);
    \foreach \x in {0,...,13}
    \draw (\x/2, 2.1) -- node[pos=0.5] (point\x) {} (\x/2.0, 1.9);
       
    \node[scale=1.0] at (-0.5,2) {$1$};
    
        \draw[thick] (0,1) -- (7,1) ;
    \foreach \x in {0,...,13}
    \draw (\x/2, 1.1) -- node[pos=0.5] (point\x) {} (\x/2.0,  0.9);
       
    \node[scale=1.0] at (-0.5,1) {$2$};     
     
    \draw[thick] (0,0) -- (7,0) node [below] {};
    \foreach \x in {0,...,13}
    \draw (\x/2, 0.1) -- node[pos=0.5] (point\x) {} (\x/2.0, -0.1);
       
    \node[scale=1.0] at (-0.5,0) {$3$};
    
    
    \node[scale=1.0] at (-1.2,2.5) {Workers $\downarrow$};    
    \node[draw,blue!70!black,ellipse,fill=white,thick, scale = 1.0]  (B) at (6,2.5) {};
    \node[scale=1.0] at (7.2,2.5) {= $1$ update}; 
    
    \node[scale=1.0] at (0,-0.5) {$k_0 = 0$};
    \draw[dashed,thick] (0,-0.3) -- (0,2.3);
    
    \node[scale=1.0] at (2,-0.5) {$k_1$};
    \draw[dashed,thick] (2,-0.3) -- (2,2.3);

    \node[scale=1.0] at (5.5,-0.5) {$k_2$};
    \draw[dashed,thick] (5.5,-0.3) -- (5.5,2.3);

    \node[draw,blue!70!black,ellipse,fill=white,thick, scale = 1.0]  (B) at (0,2) {};
    \node[draw,blue!70!black,ellipse,fill=white,thick, scale = 1.0]  (B) at (0,1) {};
    \node[draw,blue!70!black,ellipse,fill=white,thick, scale = 1.0]  (B) at (0,0) {};

    \node[draw,blue!70!black,ellipse,fill=white,thick, scale = 1.0]  (B) at (0.5,1) {};
    
    \node[draw,blue!70!black,ellipse,fill=white,thick, scale = 1.0]  (B) at (1,1) {};

    \node[draw,blue!70!black,ellipse,fill=white,thick, scale = 1.0]  (B) at (1.5,0) {};

    \node[draw,blue!70!black,ellipse,fill=white,thick, scale = 1.0]  (B) at (2,2) {};
    
    \node[draw,blue!70!black,ellipse,fill=white,thick, scale = 1.0]  (B) at (2.5,1) {};
    
    \node[draw,blue!70!black,ellipse,fill=white,thick, scale = 1.0]  (B) at (3,0) {};
    
    \node[draw,blue!70!black,ellipse,fill=white,thick, scale = 1.0]  (B) at (3.5,2) {};
    
    \node[draw,blue!70!black,ellipse,fill=white,thick, scale = 1.0]  (B) at (4,0) {};

    \node[draw,blue!70!black,ellipse,fill=white,thick, scale = 1.0]  (B) at (4.5,2) {};
    
    \node[draw,blue!70!black,ellipse,fill=white,thick, scale = 1.0]  (B) at (5,2) {};

    \node[draw,blue!70!black,ellipse,fill=white,thick, scale = 1.0]  (B) at (5.5,1) {};

    \node[draw,blue!70!black,ellipse,fill=white,thick, scale = 1.0]  (B) at (6,2) {};
    
    \node[draw,blue!70!black,ellipse,fill=white,thick, scale = 1.0]  (B) at (6.5,0) {};


    \draw[thick,dashed] (7,2) -- (8,2);
    \draw[thick,->] (8,2) -- (12.7,2);
    \foreach \x in {0,...,7}
    \draw (8+\x/2, 2.1) -- node[pos=0.5] (point\x) {} (8+\x/2.0, 1.9);

    \draw[thick,dashed] (7,1) -- (8,1);
    \draw[thick,->] (8,1) -- (12.7,1);
    \foreach \x in {0,...,7}
    \draw (8+\x/2, 1.1) -- node[pos=0.5] (point\x) {} (8+\x/2.0, 0.9);

    \draw[thick,dashed] (7,0) -- (8,0);
    \draw[thick,->] (8,0) -- (12.7,0) node [scale=1.0,below] {iterations};
    \foreach \x in {0,...,7}
    \draw (8+\x/2, 0.1) -- node[pos=0.5] (point\x) {} (8+\x/2.0, -0.1);


    \node[scale=1.0] at (8.5,-0.5) {$k_{m-1}$};
    \draw[dashed,thick] (8.5,-0.3) -- (8.5,2.3);

    \node[scale=1.0] at (11,-0.5) {$k_{m}$};
    \draw[dashed,thick] (11,-0.3) -- (11,2.3);
    
    \node[draw,blue!70!black,ellipse,fill=white,thick, scale = 1.0]  (B) at (8,0) {};

    \node[draw,blue!70!black,ellipse,fill=white,thick, scale = 1.0]  (B) at (8.5,2) {};

    \node[draw,blue!70!black,ellipse,fill=white,thick, scale = 1.0]  (B) at (9,1) {};
    
    \node[draw,blue!70!black,ellipse,fill=white,thick, scale = 1.0]  (B) at (9.5,1) {};
    
    \node[draw,blue!70!black,ellipse,fill=white,thick, scale = 1.0]  (B) at (10,0) {};
    
    \node[draw,blue!70!black,ellipse,fill=white,thick, scale = 1.0]  (B) at (10.5,2) {};
    
    \node[draw,blue!70!black,ellipse,fill=white,thick, scale = 1.0]  (B) at (11,0) {};

    \node[draw,blue!70!black,ellipse,fill=white,thick, scale = 1.0]  (B) at (11.5,2) {};
    
    \node[draw,blue!70!black,ellipse,fill=white,thick, scale = 1.0]  (B) at (5,2) {};
    
  \end{tikzpicture}}
    \caption{Illustration of the epoch sequence for $M=3$ workers. Each circle corresponds to one update, i.e, one iteration. 
    }
    \label{fig:epochs}
\end{figure}
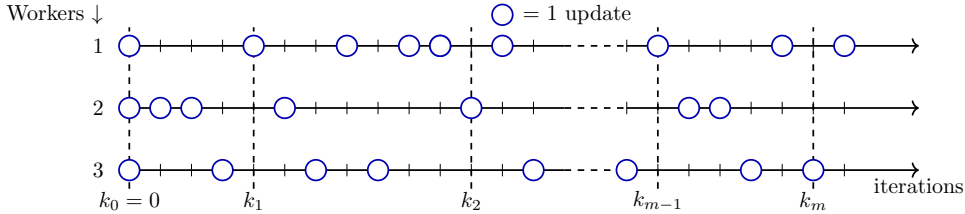

In words, $k_{m}$ is the first moment when all workers have updated twice since $k_{m-1}$. This is illustrated by Figure~\ref{fig:epochs}.
Thus, $k_{m}$ is the first moment when $\overline x^k$ no longer depends directly on information from moments before $k_{m-1}$. Indeed, we have $\overline x^k = \sum_i \pi_i x_i^{k - d_i^k}$ and $x_i^{k-d_i^k}$ was computed using $\overline x^{k - D_i^k}$. 
}

Note that we always have $k_m\geq 2M-1$. Furthermore, in the degenerate case when $M=1$, the epoch sequence corresponds to the time sequence: we have $k_m = m$, because on the interval $[m, m+1]$ there are exactly two updates of the only slave. 

\modif{
In addition, we will assume that the number of epochs goes to infinity, i.e. all workers eventually respond, in order to get convergence.
This is in line with the literature on totally asynchronous algorithms  (see Assumption 1.1 in Chap. 6 of \cite{bertsekas1989parallel}). Nevertheless, our results in the strongly convex case (Theorem~\ref{thm:strong} and its corollary) are still valid even when there is a finite number of epochs; in that case, they mean that the iterates will reach  a ball around the solution of a radius controlled by the number of epochs performed.}

\subsection{Preliminary: local iterations}
\label{sec:local}

To understand why the algorithm converges as a whole, let us first take a close look at how one local iteration of RPG enables iterates to get closer to a local solution. Indeed, a special property of the algorithm is that local variables $(x_i^k)$ do not converge to the same value as the master variable~$\overline x^k$. In contrast, they go to the \emph{local shifted optimal point} $x_i^\star := x^\star - \gamma_i \nabla f_i(x^\star)$.

At worker $i$ and time $k$, $x_i^k = x_i^{k-d_i^k}$ was obtained by $p = p(i,k-d_i^k)$ repetitions of proximal gradient. Starting with the reception of $\overline x^{k-D_i^k}$ and initializing $\Delta^{(0)} = 0$, the $p$ local iterations (indexed by superscripts with parentheses) are obtained by the repetition of
\begin{align*}
z^{(q)} &= \prox_{\gamma g}(\overline x^{k - D_i^k} + \Delta^{(q-1)}),\\
x_i^{(q)} &= z^{(q)}  - \gamma_i\nabla f_i(z^{(q)} )\\
\Delta^{(q)} &= \Delta^{(q-1)} +  \pi_i\left(x_i^{(q)} - x_i^{(q-1)} \right)
\end{align*}
for $q=1,..,p$. Then, $x_i^{k-d_i^k} = x_i^{(p)}$ and $ \Delta^{k-d_i^k} = \Delta^{(p)} $.

The next lemma is fundamental to the analysis of our algorithm. It describes how the local computations go towards their own local shifted optimal point, compared~to
\begin{align}
\label{eq:ak}
\mathbf{a}^k := \max\left(\left\|\overline x^{k} - \overline x^\star\right\|^2, \left\|\overline x^{k}_{-i(k)} - \overline x^\star_{-i(k)}\right\|^2\right) ,   
\end{align}
   where $i(k)$ is the updating agent at time $k$ and 
    \begin{align*}
    \overline x^\star &= \Sum\pi_i x_i^\star,& \overline{x}_{-i}^{k} &= \frac{1}{1 - \pi_i}\sum_{j\neq i} \pi_j x_j^{k},& \overline x^\star_{-i} &=  \frac{1}{1 - \pi_i} \sum_{j\neq i} \pi_j x_j^\star.
    \end{align*}

   \modif{In addition, we have that $x^\star = \prox_{\gamma g}(\overline x^\star)$ by first-order optimality conditions of Problem~\eqref{eq:pb}.}

\begin{lemma}
\label{lem:main}
Let $f_i$ be $\mu_i$-strongly convex ($\mu_i\geq 0$) and $L_i$-smooth, $g$ be convex lsc. Then, with $\gamma_i\in (0, 2/(L_i+\mu_i)]$, we have for any $k$ that after $p_i^{k}$ repetitions
    \begin{align*}
        \left\|x_i^k - x_i^\star\right\|^2 \le \left(1 - \gamma_i \mu_i\right)^2 r_i(p_i^{k})^2 ~ \mathbf{a}^{k-D_i^k}
    \end{align*}
    with $r_i(p) = 1  - \gamma_i \mu_i\sum_{q=1}^{p-1}  (1 - \gamma_i \mu_i)^{q-1}  \pi_i^{q} $.

Furthermore, if $\mu_i = 0$,  with $\gamma_i\in (0, 2/L_i)$, we have for any $k$ and any number of repetitions
        \begin{align*}
        \left\|x_i^k - x_i^\star\right\|^2 \le  \mathbf{a}^{k-D_i^k} - \gamma_i\left(\frac{2}{L_i} - \gamma_i \right)\left\|\nabla f_i(z^{(p)}) - \nabla f_i(x^\star)\right\|^2  
    \end{align*}
    where $z^{(p)}$ is such that $x_i^k = z^{(p)} - \gamma_i \nabla f_i(z^{(p)})$.
\end{lemma}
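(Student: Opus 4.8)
The plan is to analyze one repetition of the proximal-gradient map and then iterate. Define the operator $T_i(x) := \prox_{\gamma g}(x) - \gamma_i \nabla f_i(\prox_{\gamma g}(x))$ composed with the $\Delta$-update, but it is cleaner to track $x_i^{(q)}$ directly. The key observation is that the fixed point of the local iteration is exactly $x_i^\star = x^\star - \gamma_i \nabla f_i(x^\star)$, since $x^\star = \prox_{\gamma g}(\overline x^\star)$ and, by the structure of the weights $\pi_i \gamma_i = \gamma/M$, the average $\overline x^\star = \sum_j \pi_j x_j^\star$ is the correct anchor. So first I would establish the one-step contraction for a plain proximal-gradient step: for $z = \prox_{\gamma g}(y)$ and $x^+ = z - \gamma_i \nabla f_i(z)$, the standard co-coercivity/strong-convexity estimate gives $\|x^+ - x_i^\star\|^2 \le (1-\gamma_i\mu_i)^2 \|y - \overline x^\star\|^2$ when $\gamma_i \le 2/(L_i+\mu_i)$ (and the sharper descent-lemma bound $\|x^+ - x_i^\star\|^2 \le \|y-\overline x^\star\|^2 - \gamma_i(2/L_i - \gamma_i)\|\nabla f_i(z) - \nabla f_i(x^\star)\|^2$ when $\mu_i = 0$). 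This is where the firm nonexpansiveness of $\prox_{\gamma g}$ (to pass from $y-\overline x^\star$ to $z - x^\star$) and the contraction of the gradient step on $f_i$ both enter.

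Next I would unwind the repetitions. At repetition $q$ the input to the prox is $\overline x^{k-D_i^k} + \Delta^{(q-1)}$, and $\Delta^{(q-1)} = \sum_{r=1}^{q-1}\pi_i(x_i^{(r)} - x_i^{(r-1)}) = \pi_i(x_i^{(q-1)} - x_i^{(0)})$ telescopes, with $x_i^{(0)} = x_i^{k-D_i^k}$. The point is that the input $y^{(q)} := \overline x^{k-D_i^k} + \pi_i(x_i^{(q-1)} - x_i^{k-D_i^k})$ is a convex combination: $y^{(q)} - \overline x^\star = \pi_i(x_i^{(q-1)} - x_i^\star) + (1-\pi_i)(\overline x^{k-D_i^k}_{-i} - \overline x^\star_{-i})$, because $\overline x^{k-D_i^k} = \pi_i x_i^{k-D_i^k} + (1-\pi_i)\overline x^{k-D_i^k}_{-i}$ and $\overline x^\star = \pi_i x_i^\star + (1-\pi_i)\overline x^\star_{-i}$. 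By convexity of $\|\cdot\|^2$, $\|y^{(q)} - \overline x^\star\|^2 \le \pi_i \|x_i^{(q-1)} - x_i^\star\|^2 + (1-\pi_i)\mathbf{a}^{k-D_i^k}$, using that both terms defining $\mathbf{a}^{k-D_i^k}$ dominate the respective pieces. Combining with the one-step bound gives the recursion $\|x_i^{(q)} - x_i^\star\|^2 \le (1-\gamma_i\mu_i)^2\big(\pi_i\|x_i^{(q-1)}-x_i^\star\|^2 + (1-\pi_i)\mathbf{a}^{k-D_i^k}\big)$, with base case $\|x_i^{(0)} - x_i^\star\|^2 \le \mathbf{a}^{k-D_i^k}$.

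Finally I would solve this linear recursion in closed form. Writing $\rho = (1-\gamma_i\mu_i)^2$ and $b = \mathbf{a}^{k-D_i^k}$, induction gives $\|x_i^{(p)} - x_i^\star\|^2 \le b\,\rho\,\big[(\rho\pi_i)^{p-1} + (1-\pi_i)\sum_{q=0}^{p-2}(\rho\pi_i)^q\big]$; after regrouping and using $\rho \le 1-\gamma_i\mu_i$ inside the geometric sum this should collapse to exactly $(1-\gamma_i\mu_i)^2 r_i(p)^2\, b$ with $r_i(p) = 1 - \gamma_i\mu_i\sum_{q=1}^{p-1}(1-\gamma_i\mu_i)^{q-1}\pi_i^q$ — matching the claimed $r_i(p)$ requires checking that the bracketed quantity equals $r_i(p)^2$, so I would verify the identity $\rho[(\rho\pi_i)^{p-1} + (1-\pi_i)\sum_{q=0}^{p-2}(\rho\pi_i)^q] = \rho\, r_i(p)^2$ (perhaps the cleanest route is to carry a bound of the form $\|x_i^{(q)}-x_i^\star\| \le (1-\gamma_i\mu_i) r_i(q+1)\sqrt{b}$ through the induction directly, i.e. work at the level of norms rather than squared norms, so that the $\sqrt{\pi_i}$ factors assemble the sum defining $r_i$ linearly). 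For the $\mu_i = 0$ case, $\rho = 1$ and the recursion with the refined descent-lemma term at the \emph{last} step $q=p$ gives $\|x_i^{(p)} - x_i^\star\|^2 \le \mathbf{a}^{k-D_i^k} - \gamma_i(2/L_i - \gamma_i)\|\nabla f_i(z^{(p)}) - \nabla f_i(x^\star)\|^2$, since all intermediate convex-combination steps are merely nonexpansive and $\pi_i \le 1$.

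The main obstacle I anticipate is the bookkeeping in the last step: making the geometric series collapse to precisely $r_i(p)^2$ rather than some looser expression. Tracking norms instead of squared norms through the induction — so the contraction reads $\|x_i^{(q)} - x_i^\star\| \le (1-\gamma_i\mu_i)(\sqrt{\pi_i}\,\|x_i^{(q-1)} - x_i^\star\| + \sqrt{1-\pi_i}\,\sqrt{b})$, which uses the triangle inequality after the convexity step — is what turns a quadratic recursion into a linear one and should make the coefficient $r_i(p)$ emerge as a sum of powers of $\sqrt{\gamma_i\mu_i}$-weighted $\pi_i$ terms cleanly; reconciling that with the stated $r_i(p)$ (which involves $(1-\gamma_i\mu_i)^{q-1}$, not its square root) is the delicate accounting I would double-check.
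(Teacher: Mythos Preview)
Your overall plan matches the paper's: prox nonexpansiveness, the standard gradient-step contraction, the convex-combination decomposition of the input $y^{(q)} = \overline x^{k-D_i^k} + \Delta^{(q-1)}$, and a recursion. You also correctly flag where the difficulty lies. But the norm-level recursion you write at the end is the wrong one, and this is exactly what blocks you from recovering $r_i(p)$.

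You propose $\|x_i^{(q)} - x_i^\star\| \le (1-\gamma_i\mu_i)\big(\sqrt{\pi_i}\,\|x_i^{(q-1)} - x_i^\star\| + \sqrt{1-\pi_i}\,\sqrt{b}\big)$, obtained by applying Jensen to the squared norm and then taking a square root. A tighter linear recursion is available directly: since
\[
y^{(q)} - \overline x^\star = \pi_i\big(x_i^{(q-1)} - x_i^\star\big) + (1-\pi_i)\big(\overline x_{-i}^{k-D_i^k} - \overline x^\star_{-i}\big)
\]
is already a convex combination of vectors, the plain triangle inequality gives coefficients $\pi_i$ and $1-\pi_i$, not their square roots. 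Combined with prox nonexpansiveness and the one-step gradient contraction, this yields $\|z^{(q)} - x^\star\| \le \beta_i\|z^{(q-1)} - x^\star\| + (1-\pi_i)\sqrt{\mathbf{a}^{k-D_i^k}}$ with $\beta_i = (1-\gamma_i\mu_i)\pi_i$. Unrolling from $\|z^{(1)} - x^\star\| \le \sqrt{\mathbf{a}^{k-D_i^k}}$ and simplifying the geometric sum produces exactly $\|z^{(p)} - x^\star\| \le r_i(p)\sqrt{\mathbf{a}^{k-D_i^k}}$; one final gradient step gives the claim. Your $\sqrt{\pi_i}$ recursion is valid but strictly looser (since $\sqrt{\pi_i} > \pi_i$ for $\pi_i<1$), and its solution involves $\pi_i^{q/2}$ rather than $\pi_i^q$ --- hence your trouble matching the stated $r_i(p)$. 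Your alternative squared-norm recursion with ratio $\rho\pi_i = (1-\gamma_i\mu_i)^2\pi_i$ has the same defect: it gives a legitimate bound, but not $r_i(p)^2$.

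Two minor points. Your base case $\|x_i^{(0)} - x_i^\star\|^2 \le \mathbf{a}^{k-D_i^k}$ is not available a priori (it is essentially the lemma at the previous update); the clean starting point is $\|z^{(1)} - x^\star\| \le \|\overline x^{k-D_i^k} - \overline x^\star\| \le \sqrt{\mathbf{a}^{k-D_i^k}}$, which comes straight from prox nonexpansiveness. Your treatment of the $\mu_i=0$ case is correct.
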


\begin{proof}
First, as $f_i$ is $\mu_i$-strongly convex and $L_i$ smooth, we have that for any $q=1,..,p$ (see for instance \cite[Chap.~3.4.2]{bubeck-book}),
\begin{align}
&\left\|x^{(q)} - x_i^\star\right \|^2 = \left\|z^{(p)} - \gamma_i \nabla f_i(z^{(q)}) - ( x^\star - \gamma_i \nabla f_i (x^\star) ) \right\|^2 \nonumber\\
\label{eq:one1}    &~\leq \left(1 - \frac{2 \gamma_i \mu_i L_i}{\mu_i + L_i}\right) \left\|z^{(q)} - x^\star\right\|^2 - \gamma_i\left(\frac{2}{\mu_i + L_i} - \gamma_i \right)\left\|\nabla f_i(z^{(q)}) - \nabla f_i(x^\star)\right\|^2 \\
&~\leq \left[ \left(1 - \frac{2 \gamma_i \mu_i L_i}{\mu_i + L_i}\right)  - \mu^2 \gamma_i\left(\frac{2}{\mu_i + L_i} - \gamma_i \right) \right] \left\|z^{(q)} - x^\star\right\|^2 \nonumber \\
\label{eq:one2}       &~= (1 - \gamma_i\mu_i)^2 \left\|z^{(q)} - x^\star\right\|^2.
\end{align}
Then, for $q=1$, we have by non-expansivity of the proximity operator that 
$$\left\|z^{(1)} -x^\star\right\|^2\nonumber \le \left\|\overline{x}^{k-D_i^k} - \overline x^\star\right\|^2 $$
which completes the proof for $p=1$. Going further, for $q\geq 2$, non-expansivity and Jensen's inequality yield
\begin{align}
\left\|z^{(q)} -x^\star\right\|^2\nonumber &\le \left\|\overline{x}^{k-D_i^k} + \Delta^{(q-1)} - \overline x^\star\right\|^2\\
        &=    \left\| \pi_i\left(x^{(q-1)} - x_i^\star\right) + \sum_{j\neq i}\pi_j \left(x_j^{k-D_i^k} - x_j^\star \right) \right\|^2\nonumber\\
        &=   \left\| \pi_i \left(x^{(q-1)} - x_i^\star\right) + (1 - \pi_i)\left(\overline x^{k-D_i^k}_{-i} - \overline x_{-i}^\star\right) \right\|^2\nonumber \\
        &\le        \pi_i \left\| x^{(q-1)} - x_i^\star \right\|^2   + (1-\pi_i) \left\|\overline x^{k-D_i^k}_{-i} - \overline x_{-i}^\star \right\|^2 \nonumber .
\end{align}

Then by induction, using the triangle inequality instead of convexity, one gets that for $p\geq 2$
(and using $\beta_i=(1 - \gamma_i \mu_i) \pi_i$)
\begin{align}
\left\|z^{(p)} - x^\star\right \| 
&\le \pi_i\left\|x^{(p-1)}-x_i^\star\right\| + (1 - \pi_i)\left\|\overline x^{k-D_i^k}_{-i} - \overline x_{-i}^\star\right\|\nonumber \\
&\le \beta_i\left\|z^{(p-1)}-x^\star\right\| + (1 - \pi_i) \sqrt{\mathbf{a}^{k-D_i^k}} \nonumber \\
&\le \beta_i^{p-1} \left\|z^{(1)}-x^\star\right\|  +  \left[ \sum_{q=1}^{p-1}  \beta_i^{q-1} (1 - \pi_i) \right] \sqrt{\mathbf{a}^{k-D_i^k}} \nonumber \\
&\le \beta_i^{p-1} \left\|\overline x^{k - D_i^k}-x^\star\right\|  +  \left[ \sum_{q=1}^{p-1}  \beta_i^{q-1} (1 - \pi_i) \right] \sqrt{\mathbf{a}^{k-D_i^k}} \label{eq:recursion_for_limited_p_and_d} \\
&\le \beta_i^{p-1} \sqrt{\mathbf{a}^{k-D_i^k}}   +  \left[ \sum_{q=1}^{p-1}  \beta_i^{q-1} (1 - \pi_i) \right] \sqrt{\mathbf{a}^{k-D_i^k}} \nonumber \\
&=   \left[ \beta_i^{p-1}  + \sum_{q=0}^{p-2}\beta_i^q  - \frac{1}{1-\gamma_i\mu_i}\sum_{q=1}^{p-1}\beta_i^q  \right] \sqrt{\mathbf{a}^{k-D_i^k}} \nonumber \\
 &=   \underbrace{ \left[ 1  - \frac{\gamma_i \mu_i}{1-\gamma_i \mu_i} \sum_{q=1}^{p-1} \beta_i^q  \right] }_{=  r_i(p) } \sqrt{\mathbf{a}^{k-D_i^k}}  \nonumber 
\end{align}noting that $i=i(k-D_i^k)$ was updating at time $k-D_i^k$ by definition.
Using the last inequality on top of\;\eqref{eq:one1} or \eqref{eq:one2} leads to the claim, noting that $r_i(p) = 1$ for all $p$ when $\mu_i=0$.
\end{proof}

\subsection{Convergence results}

\modif{
In this section, we analyze the convergence of our algorithm, first in the strongly convex case, and second in the general case. In both cases, our results allow us to choose the same stepsize as for vanilla gradient descent (without any dependence on the delays). The derived rates involve the \emph{number of epochs} rather than the number of iterations. In Section~\ref{sec:comp_delay}, we examine how these rates translate in terms of number of iteration under boundedness of the delays in order to compare with the literature. 
}

\subsubsection{Linear convergence in the strongly convex case}
If all the local functions $(f_i)$ are strongly convex, the convergence of our algorithm is linear on the epoch sequence. 

\begin{theorem}[Strongly convex case]
\label{thm:strong}
Let the functions $(f_i)$ be $\mu_i$-strongly convex ($\mu_i>0$) and $L_i$-smooth. Let $g$ be convex lsc. Using $\gamma_i \in (0, \frac{2}{\mu_i + L_i}]$, DAve-RPG converges linearly on the epoch sequence $(k_m)$, with the rate $\rho := \min_i\gamma_i \mu_i$. More precisely, for all $k\in [k_m, k_{m+1})$
\begin{align*}
    \left\| x^k - x^\star \right\|^2 \le \left(1 - \rho  \right)^{2m} \max_i\left\|x_i^0-x_i^\star\right\|^2,
\end{align*}
with the shifted local solutions $x_i^\star = x^\star - \gamma_i\nabla f_i(x^\star)$.
\end{theorem}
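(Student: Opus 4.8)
The plan is to propagate the bound from Lemma~\ref{lem:main} through the epoch sequence by a clean induction on $m$. Define the key quantity $\mathbf{A}^k := \max_{j} \left\| x_j^k - x_j^\star \right\|^2$ (the worst local-parameter error over all workers) and, as the natural companion, the quantity $\mathbf{a}^k$ from \eqref{eq:ak}. The first step is to check the two elementary consistency relations: (i) $\mathbf{a}^k \le \mathbf{A}^k$, since $\overline x^k - \overline x^\star$ and $\overline x^k_{-i(k)} - \overline x^\star_{-i(k)}$ are both convex combinations of the vectors $x_j^k - x_j^\star$, so Jensen gives $\|\cdot\|^2 \le \max_j \|x_j^k - x_j^\star\|^2$; and (ii) $\mathbf{a}^k$ is "monotone along delays" in the sense we actually need, namely $\mathbf{a}^{k-D_i^k}\le \max_{\ell \in [k_{m-1},k]}\mathbf{a}^\ell$ whenever $k \ge k_m$ — this is exactly what the epoch definition buys us, since $k \ge k_m$ forces $k - D_i^k \ge k_{m-1}$ for every $i$.

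The second step is the per-iteration decrease. Fix $k$ and let $i=i(k)$ be the updating worker. For $j \ne i$ we have $x_j^k = x_j^{k-1}$, so those errors are unchanged. For $j = i$, Lemma~\ref{lem:main} (strongly convex branch, with $r_i(p)\le 1$ since all terms subtracted are nonnegative) gives $\|x_i^k - x_i^\star\|^2 \le (1-\gamma_i\mu_i)^2\, \mathbf{a}^{k-D_i^k} \le (1-\rho)^2\, \mathbf{a}^{k-D_i^k}$. Hence for all $k$,
\begin{align*}
\mathbf{A}^k \le \max\left( \mathbf{A}^{k-1},\ (1-\rho)^2\,\mathbf{a}^{k-D_i^k} \right).
\end{align*}
Now run this over one epoch. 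For $k \in (k_{m-1}, k_{m+1})$ — actually it is cleanest to show the invariant that $\mathbf{A}^k \le (1-\rho)^{2m}\mathbf{A}^0$ for all $k \ge k_m$ — combine the per-iteration bound with relation (ii): any index of the form $k - D_i^k$ with $k \ge k_m$ lies in $[k_{m-1}, \infty)$, and by the inductive hypothesis applied at epoch $m-1$ we have $\mathbf{a}^\ell \le \mathbf{A}^\ell \le (1-\rho)^{2(m-1)}\mathbf{A}^0$ for all $\ell \ge k_{m-1}$. Feeding this into the recursion and noting that for $k \ge k_m$ every worker has updated at least once in $[k_{m-1}, k)$ (so the "old" part of the max gets refreshed by a term of size $(1-\rho)^2 (1-\rho)^{2(m-1)}\mathbf{A}^0$) yields $\mathbf{A}^k \le (1-\rho)^{2m}\mathbf{A}^0$ for all $k \ge k_m$, completing the induction; the base case $m=0$ is trivial since $(1-\rho)^0 = 1$.

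The final step converts the local bound into the stated bound on $x^k = \prox_{\gamma g}(\overline x^k)$. Since $x^\star = \prox_{\gamma g}(\overline x^\star)$ by the first-order optimality condition recalled before the lemma, nonexpansiveness of $\prox_{\gamma g}$ gives $\|x^k - x^\star\|^2 \le \|\overline x^k - \overline x^\star\|^2 \le \mathbf{a}^k \le \mathbf{A}^k$, and for $k \in [k_m, k_{m+1})$ this is $\le (1-\rho)^{2m}\mathbf{A}^0 = (1-\rho)^{2m}\max_i \|x_i^0 - x_i^\star\|^2$.

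\textbf{Main obstacle.} The routine parts (nonexpansiveness, Jensen) are immediate; the delicate point is getting the bookkeeping of the induction exactly right — specifically making sure the recursion $\mathbf{A}^k \le \max(\mathbf{A}^{k-1}, (1-\rho)^2\mathbf{a}^{k-D_i^k})$ actually collapses to a genuine decrease by epoch $m$ rather than merely a non-increase. The crux is that by time $k_m$ the epoch definition guarantees \emph{every} worker has performed an update whose input $\overline x^{k-D_i^k}$ was drawn from $[k_{m-1},\infty)$, so the stale "$\mathbf{A}^{k-1}$" alternative in the max is successively overwritten for all coordinates $j$; one must track that all $M$ coordinates have been refreshed before claiming the factor $(1-\rho)^2$ has been applied once per coordinate across the epoch. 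Handling the case of a finite number of epochs (mentioned in the remark) requires only observing that the same inequality chain stops improving but never degrades.
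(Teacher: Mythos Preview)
Your proposal is correct and follows essentially the same route as the paper: apply Lemma~\ref{lem:main} to every worker, combine with Jensen, use the epoch definition $k\ge k_m \Rightarrow k-D_j^k\ge k_{m-1}$, and induct on $m$; then finish with nonexpansiveness of $\prox_{\gamma g}$. The paper tracks $\mathbf{a}^k$ itself and proves the contraction $\max_{k\in[k_m,k_{m+1})}\mathbf{a}^k \le (1-\rho)^2 \max_{k'\in[k_{m-1},k_m)}\mathbf{a}^{k'}$, whereas you track the coarser $\mathbf{A}^k=\max_j\|x_j^k-x_j^\star\|^2$ and use $\mathbf{a}^k\le\mathbf{A}^k$; the two are equivalent in effect.

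Two small clean-ups. First, the one-coordinate recursion $\mathbf{A}^k\le\max(\mathbf{A}^{k-1},(1-\rho)^2\mathbf{a}^{k-D_{i(k)}^k})$ is an unnecessary detour: since Lemma~\ref{lem:main} holds for \emph{every} $j$ (not just the updating worker, because $x_j^k=x_j^{k-d_j^k}$), you get directly $\mathbf{A}^k\le(1-\rho)^2\max_j\mathbf{a}^{k-D_j^k}$, which is exactly what the paper uses and makes your induction a one-liner. Second, your phrase ``every worker has updated at least once in $[k_{m-1},k)$'' is too weak as stated---one update with input possibly from before $k_{m-1}$ would not suffice; the operative fact, which you did record earlier, is $k-D_j^k\ge k_{m-1}$ for all $j$ (equivalently, two updates since $k_{m-1}$), so just cite that instead.
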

\begin{proof}
First, for any $i$ and any $k\in[k_m, k_{m+1})$, we have from Lemma~\ref{lem:main} 
\begin{align*}
    \left\|x_i^k - x_i^\star\right\|^2 &= (1-\gamma_i\mu_i)^2 r_i(p_i^{k})^2 ~~ \mathbf{a}_i^{k-D_i^k} \leq (1-\rho)^2 ~~ \mathbf{a}_i^{k-D_i^k}
\end{align*}

Thus, for any  $k\in[k_m;k_{m+1})$,
\begin{align}
\label{eq:bound_by_max_j}
    \left\|\overline x^k - \overline x^\star \right\|^2 &\le \Sum\pi_i \|x_i^{k} - x_i^\star \|^2 =  \Sum\pi_i \|x_i^{k-d_i^k} - x_i^\star \|^2\nonumber\\
    &\le (1-\rho)^2 ~~   \Sum\pi_i \mathbf{a}^{k-D_i^k}  \le (1-\rho)^2 ~~   \max_i \mathbf{a}^{k-D_i^k}
\end{align}
Similarly, for any $j$
\begin{align}
\label{eq:bound_by_max_j_2}
    \left\|\overline{x}^{k}_{-j} - \overline x^\star_{-j} \right\|^2
    &\le (1 - \pi_j)^{-1}\sum_{i\neq j} \pi_i\left\|x_i^{k - d_i^k} - x_i^\star\right\|^2 \le (1-\rho)^2 ~~   \max_i \mathbf{a}^{k-D_i^k} .
\end{align}
Finally, we get 
\begin{align*}
    \mathbf{a}^{k} \le \left(1 - \rho \right)^2 \max_i \mathbf{a}^{k-D_i^k}
\end{align*}
which is the workhorse for the rest of the proof.

Let $m>0$ and $k\in[k_m, k_{m+1})$, then the definition of the epoch sequence $(k_m)$ gives $k-D_i^k \ge k_{m-1}$ and then
\begin{align*}
    \mathbf{a}^{k} \le \left(1 - \rho \right)^2 \max_i \mathbf{a}^{k-D_i^k} \le \left(1 - \rho \right)^2 \max_{k'\in[k_{m-1},k)} \mathbf{a}^{k'}
\end{align*}
and applying this inequality sequentially to $k_m, k_m+1,\dotsc, k_{m+1} - 1$, we get
    \begin{align}
      \label{eq:km}  
      \mathbf{a}^{k_m} &\le (1 - \rho)^2\max_{k'\in [k_{m-1}, k_m)}  \mathbf{a}^{k'},\\
   \nonumber    
     \mathbf{a}^{k_m+1} &\le  (1 - \rho)^2 \max\left( \max_{k'\in [k_{m-1}, k_m)}  \mathbf{a}^{k'} , \mathbf{a}^{k_m} \right) \\
      \nonumber       
      &\le  (1 - \rho)^2 \max_{k'\in [k_{m-1}, k_m)}  \mathbf{a}^{k'}  ~~~~~ \text{(using Eq.~\eqref{eq:km})} \\
     \nonumber        &~~~~~~~~~ ... \\
       \max_{k\in [k_m, k_{m+1})}  \mathbf{a}^{k} &\le (1 - \rho)^2\max_{k'\in [k_{m-1}, k_m)} \mathbf{a}^{k'}\nonumber \\
        &\le (1 - \rho)^{2m} \max_{k'< k_0}  \mathbf{a}^{k'} \le (1 - \rho)^{2m} \max_i\left\|x_i^0 - x_i^\star\right\|^2 \nonumber.
    \end{align}
Finally, since the proximity operator of a convex function is non-expansive, we have for all  $k\in[k_m;k_{m+1})$, 
\begin{align*}
     \| x^k - x^\star \|^2 &= \|\prox_{\gamma g} (\overline x^k ) - \prox_{\gamma g} (\overline x^\star ) \|^2 \le \|\overline x^k - \overline x^\star\|^2 \\
     &\le    \max_{k\in [k_m, k_{m+1})}  \mathbf{a}^{k} \le (1 - \rho)^{2m} \max_i\left\|x_i^0 - x_i^\star\right\|^2
\end{align*}
which concludes the proof.
\end{proof}

Notice that the rate provided by this theorem is valid for any choice of number of local iterations at any worker/time. The local contraction at agent $i$ can indeed be improved by doing $p$ local repetitions by a factor  
$$ r_i(p) =   1  -  \gamma_i \mu_i\sum_{q=1}^{p-1}  (1 - \gamma_i \mu_i)^{q-1}  \pi_i^{q} =  1  -  \gamma_i \mu_i \pi_i \frac{1 - (1 - \gamma_i \mu_i)^{p-1}  \pi_i^{p-1} }{1 - (1 - \gamma_i \mu_i)  \pi_i }   $$
where $r_i(1) = 1$ and $r_i$ is decreasing with $p$ and lower-bounded by 
$$ r_i(\infty) =    1  -  \frac{ \gamma_i \mu_i \pi_i}{1  - (1 - \gamma_i \mu_i) \pi_i}.$$

If all workers, or at least the ones with the slowest rates, perform several local iterations, the rate can thus be improved as stated by the following result. However, local iterations practically slow down the actual time between two epochs thus the number of local repetitions have to be carefully tuned in practice. The flexibility allowed by our algorithm enables a wide range of selection strategies such as online tuning, stopping the local iterations after some fixed time, etc.

\begin{corollary}[Tighter rates for the strongly convex case]
\label{cor:strong}
Let the functions $(f_i)$ be $\mu_i$-strongly convex ($\mu_i>0$) and $L_i$-smooth. Let $g$ be convex lsc. Using $\gamma_i \in (0, \frac{2}{\mu_i + L_i}]$, DAve-RPG converges linearly on the epoch sequence $(k_m)$, in the sense that for all $k\in [k_m, k_{m+1})$
\begin{align*}
    \left\| x^k - x^\star \right\|^2 \le \left( \Pi_{\ell=1}^m  \alpha_\ell \right) \max_i\left\|x_i^0-x_i^\star\right\|^2,
\end{align*}
with $\alpha_\ell = \max_{i,k\in[k_\ell, k _{\ell+1})} (1-\gamma_i\mu_i)^2r_i(p_i^k)^2$ and  $x_i^\star = x^\star - \gamma_i\nabla f_i(x^\star)$.

In particular, the rate can be uniformly improved to $\alpha = \max_{i,k} (1-\gamma_i\mu_i)^2r_i(p_i^k)^2$.
\end{corollary}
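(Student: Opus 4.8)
The plan is to imitate the proof of Theorem~\ref{thm:strong} almost verbatim, replacing the single contraction factor $(1-\rho)^2$ by a per-epoch, per-worker factor and then tracking a product rather than a power. First I would recall from Lemma~\ref{lem:main} that for strongly convex $f_i$ and any $k$, $\|x_i^k - x_i^\star\|^2 \le (1-\gamma_i\mu_i)^2 r_i(p_i^k)^2 \,\mathbf{a}^{k-D_i^k}$, where $p_i^k$ is the number of repetitions used in the computation that produced $x_i^{k-d_i^k}$. For $k\in[k_\ell, k_{\ell+1})$, bounding $(1-\gamma_i\mu_i)^2 r_i(p_i^k)^2$ by $\alpha_\ell := \max_{i,\,k\in[k_\ell,k_{\ell+1})}(1-\gamma_i\mu_i)^2 r_i(p_i^k)^2$ gives $\|x_i^k - x_i^\star\|^2 \le \alpha_\ell\, \mathbf{a}^{k-D_i^k}$ uniformly over $i$. (One subtlety to be careful about: the repetitions $p_i^k$ attached to the local variable read at time $k$ were chosen at an earlier iteration $k-d_i^k$; since $\alpha_\ell$ is defined as a max over the whole epoch this causes no trouble, but I would state explicitly which iterations' repetition counts enter $\alpha_\ell$.)

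Next, exactly as in \eqref{eq:bound_by_max_j} and \eqref{eq:bound_by_max_j_2}, I would use $\|\overline x^k-\overline x^\star\|^2 \le \sum_i \pi_i\|x_i^k-x_i^\star\|^2$ and the analogous bound on $\|\overline x^k_{-j}-\overline x^\star_{-j}\|^2$, together with $\sum_i\pi_i = 1$ and a max over $i$, to deduce
\begin{align*}
\mathbf{a}^k \le \alpha_\ell \max_i \mathbf{a}^{k-D_i^k} \qquad \text{for all } k\in[k_\ell,k_{\ell+1}).
\end{align*}
Then, for $k\in[k_m,k_{m+1})$, the epoch definition gives $k-D_i^k \ge k_{m-1}$, so $\mathbf{a}^k \le \alpha_m \max_{k'\in[k_{m-1},k)}\mathbf{a}^{k'}$; cascading this within the epoch (the same telescoping argument that yields \eqref{eq:km}) gives $\max_{k\in[k_m,k_{m+1})}\mathbf{a}^k \le \alpha_m \max_{k'\in[k_{m-1},k_m)}\mathbf{a}^{k'}$, and iterating over $\ell=m,m-1,\dots,1$ produces $\max_{k\in[k_m,k_{m+1})}\mathbf{a}^k \le \big(\prod_{\ell=1}^m \alpha_\ell\big)\max_{k'<k_0}\mathbf{a}^{k'} \le \big(\prod_{\ell=1}^m\alpha_\ell\big)\max_i\|x_i^0-x_i^\star\|^2$. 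Finally, nonexpansivity of $\prox_{\gamma g}$ gives $\|x^k-x^\star\|^2 = \|\prox_{\gamma g}(\overline x^k)-\prox_{\gamma g}(\overline x^\star)\|^2 \le \|\overline x^k-\overline x^\star\|^2 \le \max_{k\in[k_m,k_{m+1})}\mathbf{a}^k$, which is the claimed bound. The uniform statement follows by bounding each $\alpha_\ell$ by $\alpha := \max_{i,k}(1-\gamma_i\mu_i)^2 r_i(p_i^k)^2$.

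I do not expect any serious obstacle here — the corollary is essentially a bookkeeping refinement of Theorem~\ref{thm:strong}, since the proof of that theorem already isolated the inequality $\mathbf{a}^k \le (1-\rho)^2\max_i\mathbf{a}^{k-D_i^k}$ and only used $(1-\rho)^2 \ge (1-\gamma_i\mu_i)^2 r_i(p_i^k)^2$ at one spot. The only mild care needed is to make sure the epoch index on $\alpha_\ell$ matches the interval in which the relevant iterations fall (one could be slightly conservative and note that all iterations $k'$ with $k'-D_i^{k'}$ reaching back into epoch $\ell-1$ still have $k'$ itself in epoch $\ell$ or later, so taking $\alpha_\ell$ as a max over $[k_\ell,k_{\ell+1})$ is safe); the telescoping then goes through unchanged. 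Since $r_i(1)=1$, setting all $p_i^k=1$ recovers Theorem~\ref{thm:strong} with $\prod_{\ell=1}^m\alpha_\ell = (1-\rho)^{2m}$, giving a consistency check.
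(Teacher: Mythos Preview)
Your proposal is correct and matches the paper's approach: the paper does not give a separate proof of this corollary, treating it as an immediate refinement of Theorem~\ref{thm:strong} obtained by keeping the full factor $(1-\gamma_i\mu_i)^2 r_i(p_i^k)^2$ from Lemma~\ref{lem:main} rather than bounding it by $(1-\rho)^2$. Your care about which iterations' repetition counts enter $\alpha_\ell$ is appropriate and consistent with the paper's convention that $p_i^k$ denotes the number of repetitions producing $x_i^{k-d_i^k}$.
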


\subsubsection{Convergence and sublinear rate in the general case}\label{sec:cv_gen}

When Problem\;\eqref{eq:pb} is not strongly convex, iterates convergence still holds with the fixed usual stepsizes at the expense of a sublinear rate.

\begin{theorem}[Convergence in the general case]
\label{general_conv_thm}
Let $(f_i)$ be convex $L_i$-smooth, $g$ be convex lsc, and $\gamma_i\in(0,2/L_i)$. Then, if $x^\star$ is the unique minimizer of~\eqref{eq:pb}, the sequence $(x^k)$ converges to $x^\star$. Moreover, if Problem\;\eqref{eq:pb} has multiples minimizers, then $(x^k)$ still converges to a minimizer of \eqref{eq:pb}, under two additional assumptions: (i) the difference between two consecutive epochs $k_{m}-k_{m-1}$ is uniformly bounded, (ii) the number of inner loops is uniformly bounded.
\end{theorem}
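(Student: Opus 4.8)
The plan is to mimic the classical Opial-type argument for Fejér-monotone sequences, but working along the epoch sequence $(k_m)$ rather than along the iteration sequence, using the second (non-strongly-convex) part of Lemma~\ref{lem:main} to supply the key inequality. First I would revisit the quantity $\mathbf{a}^k$ of \eqref{eq:ak} and show that it is \emph{non-increasing along epochs}: from Lemma~\ref{lem:main} with $\mu_i=0$ we have, for every worker $i$, $\|x_i^k-x_i^\star\|^2\le \mathbf{a}^{k-D_i^k} - \gamma_i(2/L_i-\gamma_i)\|\nabla f_i(z^{(p)})-\nabla f_i(x^\star)\|^2$; averaging as in \eqref{eq:bound_by_max_j} and \eqref{eq:bound_by_max_j_2} and taking the max gives $\mathbf{a}^k\le \max_i \mathbf{a}^{k-D_i^k}$ minus a nonnegative residual. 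Propagating this within one epoch (exactly as in the sequential argument in the proof of Theorem~\ref{thm:strong}, but with the factor $(1-\rho)^2$ replaced by $1$) yields that $V_m := \max_{k\in[k_m,k_{m+1})}\mathbf{a}^k$ is non-increasing in $m$, hence convergent, and moreover that the accumulated gradient residuals $\sum_m \sum_{\text{updates in epoch }m}\gamma_i(2/L_i-\gamma_i)\|\nabla f_i(z^{(p)})-\nabla f_i(x^\star)\|^2$ are summable. Since $\gamma_i\in(0,2/L_i)$, this forces $\nabla f_i(z^{(p)})-\nabla f_i(x^\star)\to 0$ for every worker along the subsequence of times at which it updates, and therefore $x_i^k-(z^{(p)})\to 0$, i.e.\ the last proximal point $z^{(p)}$ and the post-gradient point $x_i^k$ asymptotically coincide for each worker.

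Next I would extract a weakly (here, since we are in $\mathbb{R}^n$, just a) convergent subsequence. Fejér monotonicity of $(\mathbf{a}^k)$ along epochs gives boundedness of $(\overline x^k)$ and of all $(x_i^k)$; pick $k_{m_t}$ with $\overline x^{k_{m_t}}\to \bar x^\infty$. Because each worker updates at least twice per epoch and $x_i^{k-d_i^k}$ was produced from $\overline x^{k-D_i^k}$ with $k-D_i^k\ge k_{m-1}$, along this subsequence the inputs $\overline x^{k-D_i^k}$ of the most recent computations of every worker also converge to $\bar x^\infty$ (this is where the epoch structure is essential). Combining with $\nabla f_i(z^{(p)})\to\nabla f_i(x^\star)$ and the firm non-expansiveness / optimality characterization of $\prox_{\gamma g}$, I would pass to the limit in the fixed-point relations $z^{(p)}=\prox_{\gamma g}(\overline x^{k-D_i^k}+\Delta^{(q-1)})$ and $x_i^k=z^{(p)}-\gamma_i\nabla f_i(z^{(p)})$ to identify $\bar x^\infty$ as a point satisfying the optimality system of \eqref{eq:pb}; concretely $\mathrm{prox}_{\gamma g}(\bar x^\infty)$ is a minimizer $\hat x$ and $\bar x^\infty = \sum_i\pi_i(\hat x-\gamma_i\nabla f_i(\hat x))$. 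When $x^\star$ is unique this already forces $\bar x^\infty$ to be the unique "shifted average" $\overline x^\star$, and since the same limit is obtained for \emph{every} convergent subsequence, $\overline x^k\to\overline x^\star$, hence $x^k=\prox_{\gamma g}(\overline x^k)\to x^\star$ by continuity. For the multiple-minimizer case, I would run the standard Opial argument: having shown that every subsequential limit of $(\overline x^k)$ is of the form $\overline x^{\hat x}$ for some minimizer $\hat x$, and that $\mathbf{a}^k$ relative to the shifted solution associated with \emph{any} minimizer is non-increasing along epochs (the Lemma holds for each $x^\star$), a two-limit-point contradiction with Opial's lemma gives uniqueness of the subsequential limit, hence convergence of $(\overline x^k)$ and thus of $(x^k)$.

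The two extra assumptions (bounded $k_m-k_{m-1}$ and bounded number of inner loops) enter precisely in the passage-to-the-limit step: they guarantee that $\max_i(k-D_i^k)$ stays within a bounded window behind $k$, so that convergence of $\overline x^{k_{m_t}}$ genuinely transfers to convergence of all the delayed inputs $\overline x^{k-D_i^k}$ used in the relevant computations, and that the finitely many $\Delta^{(q)}=\sum_{r\le q}\pi_i(x_i^{(r)}-x_i^{(r-1)})$ are controlled by the (vanishing) gradient residuals rather than accumulating. In the unique-minimizer case these are not needed because one can instead argue directly that $\mathbf{a}^k\to 0$: the summable-residual conclusion combined with strong convexity of the \emph{sum} near the unique solution — or, more elementarily, with the fact that the only fixed point of the averaged proximal-gradient map is $\overline x^\star$ together with the contraction-up-to-residual inequality — pins the limit without reference to delay windows. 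I expect the main obstacle to be exactly this limit transfer: carefully bookkeeping which delayed variable feeds which computation and showing that along a well-chosen subsequence \emph{all} of the relevant lagged iterates converge simultaneously, so that the limiting relations can be assembled into the optimality system; this is the step where the epoch machinery, the boundedness assumptions, and the summability of gradient residuals must be made to work together.
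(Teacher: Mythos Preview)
Your outline has a real gap at the first step where it departs from the strongly convex proof: the claim that the gradient residuals are \emph{summable}. From Lemma~\ref{lem:main} you indeed get $\|x_i^k-x_i^\star\|^2\le \mathbf{a}^{k-D_i^k}-\omega_i\|\nabla f_i(z^{(p)})-\nabla f_i(x^\star)\|^2$, but after averaging and taking the max the right-hand side becomes $\max_i\mathbf{a}^{k-D_i^k}$, which does \emph{not} telescope with the left-hand side. The epoch-level inequality you obtain is only $V_m\le V_{m-1}$ after the residual has been dropped; at best you can keep the residual at the single time $k^\ast\in[k_m,k_{m+1})$ realizing the max, and even there the second branch of $\mathbf{a}^{k^\ast}$ (the $\overline x_{-i(k^\ast)}$ term) loses one summand. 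So neither $\sum_k\|\nabla f_i(z_i^{k})-\nabla f_i(x^\star)\|^2<\infty$ nor $\nabla f_i(z_i^{k})\to\nabla f_i(x^\star)$ along the full sequence follows. Since every subsequent step of your argument (the claim $x_i^k-z^{(p)}\to0$, the limit transfer, and the Opial step) rests on this, the outline does not go through as written.

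The paper gets the vanishing of residuals by a different mechanism that does not require summability. It fixes a minimizer $x^\star$, lets $\mathbf{b}^m\downarrow\mathbf{b}$, and looks at a subsequence $(s^m)$ of the argmax times of $\mathbf{a}^k$ on $[k_m,k_{m+1})$. Along this subsequence $\|\overline x^{s^m}-\overline x^\star\|^2\to\mathbf{b}$ while $\limsup\|x_i^{s^m}-x_i^\star\|^2\le\mathbf{b}$, so Jensen's inequality is asymptotically an \emph{equality}; this forces all limit differences $x_i-x_i^\star$ to coincide, hence $\|x_i^{s^m}-x_i^\star\|^2\to\mathbf{b}$ for every $i$. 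Only then does the residual inequality bite: $\omega_i\|\nabla f_i(z_i^{s^m})-\nabla f_i(x^\star)\|^2\le \mathbf{a}^{s^m-D_i^{s^m}}-\|x_i^{s^m}-x_i^\star\|^2\to\mathbf{b}-\mathbf{b}=0$. A parallel use of firm non-expansiveness controls the prox residual $z_i^{s^m}-w_i^{s^m}-(x^\star-\overline x^\star)$. The optimality system is then assembled at the limit and, under uniqueness, gives $\mathbf{b}=0$. There is also a nontrivial side argument ruling out the case where the max in $\mathbf{a}^{k}$ is realized by the $\overline x_{-i(k)}$ branch infinitely often.

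Two further points. Your ``limit transfer'' step, asserting that convergence of $\overline x^{k_{m_t}}$ propagates to the delayed inputs $\overline x^{k-D_i^k}$, is not justified by the epoch structure alone: knowing $k-D_i^k\ge k_{m_{t}-1}$ does not control $\overline x$ at an arbitrary time in $[k_{m_t-1},k_{m_t})$. The paper avoids this by never needing the delayed inputs to converge; it only uses $\mathbf{a}^{k-D_i^k}\le\mathbf{b}^{m-1}\to\mathbf{b}$. Finally, for the multiple-minimizer case the paper does not run a standard Opial argument. It uses the bounded-inner-loop assumption to define a refined Lyapunov $\mathbf{a'}^k$ (a convex combination of $\|\overline x^k-\overline x^\star\|^2$ and $\mathbf{a}^k$ with weight $\beta^{p-1}$), and the bounded-delay assumption to show that $\max_{0\le d<D}\mathbf{a'}^{k+d}$ converges, from which it deduces $\|\overline x^k-\overline x^\star\|^2\to\mathbf{b}$ along the \emph{whole} sequence; only then does the unique-minimizer argument identify $\mathbf{b}=0$ for the cluster point $x$. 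Your sketch of Opial does not capture this, and in particular does not explain where exactly the two extra assumptions are used.
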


\modif{ 
From a mathematical point of view, this result and its proof are the main technical novelties of this paper. We put below the proof of the first part of the result: convergence under no additional assumptions when \eqref{eq:pb} has a unique minimizer. For readability, we postpone to Appendix~\ref{apx:gen} the proof the second part when \eqref{eq:pb} has multiple minimizers. Note that this second part requires an assumption on bounded delays (see more in the discussion of Section \ref{sec:comp_delay}) but no knowledge about this bound (which does not appears in the stepsize range or in the proof).}

\modif{
\begin{proof}
For any $i$ and any $k\in[k_m;k_{m+1})$, we have from Lemma~\ref{lem:main} 
    \begin{align}
    \label{eq:f}
        \left\|x_i^k - x_i^\star\right\|^2 \le  \mathbf{a}^{k-D_i^k} - \gamma_i\left(\frac{2}{L_i} - \gamma_i \right)\left\|\nabla f_i(z^{(p)}) - \nabla f_i(x^\star)\right\|^2  
    \end{align}
    where $\mathbf{a}^{k-D_i^k}$ is the error at time $k-D_i^k$ (see \eqref{eq:ak}) and $z^{(p)}$ is such that $x_i^k = z^{(p)} - \gamma_i \nabla f_i(z^{(p)})$.
Thus, as in Theorem~\ref{thm:strong}, for any  $k\in[k_m;k_{m+1})$, we have by dropping the last term
\begin{align*}
    \left\|\overline x^k - \overline x^\star \right\|^2 &\le \Sum\pi_i \|x_i^{k} - x_i^\star \|^2 =  \Sum\pi_i \|x_i^{k-d_i^k} - x_i^\star \|^2\le   \Sum\pi_i \mathbf{a}^{k-D_i^k}  \le    \max_i \mathbf{a}^{k-D_i^k}.  \nonumber
\end{align*}
Similarly, for any $j$
\begin{align*}
    \left\|\overline{x}^{k}_{-j} - \overline x^\star_{-j} \right\|^2
    &\le (1 - \pi_j)^{-1}\sum_{i\neq j} \pi_i\left\|x_i^{k - d_i^k} - x_i^\star\right\|^2 \le \max_i \mathbf{a}^{k-D_i^k}  \nonumber.
\end{align*}
Finally, we get $ \mathbf{a}^{k} \le \max_i \mathbf{a}^{k-D_i^k} $ from which we can prove using the same arguments as in the proof of Theorem~\ref{thm:strong}
\begin{align*}
          \max_{k\in [k_m, k_{m+1})}  \mathbf{a}^{k} &\le \max_{k'\in [k_{m-1}, k_m)} \mathbf{a}^{k'}, 
\end{align*}
which means that 
\begin{align}
\label{eq:bk}
 \mathbf{b}^m :=\max_{k\in [k_m, k_{m+1})}  \mathbf{a}^{k}
\end{align}
is non-increasing, so that it converges to a non-negative value $\mathbf{b}$.
Getting back to~\eqref{eq:f}, we get that for any $i$ and any $k\in[k_m;k_{m+1})$,
\begin{align*}
     \left\|x_i^k - x_i^\star\right\|^2 \le  \mathbf{a}^{k-D_i^k} \le  \mathbf{b}^{m-1}
\end{align*}
thus when $m\to\infty$, we get that 
\begin{align}\label{eq:b}
\limsup_k  \left\|x_i^k - x_i^\star\right\|^2  \le \mathbf{b}.
\end{align}
The remainder of the proof consists in proving that $\mathbf{b}=0$.

Let $(l^m)$ be a time sequence realizing the max in~\eqref{eq:bk}, i.e.
\begin{align}
\label{eq:lm}
  l^m \in \argmax_{k\in [k_m, k_{m+1})}  \mathbf{a}^{k}
\end{align}
then, be get that $ \mathbf{a}^{l^m} \to \mathbf{b}$ as $m\to\infty$.
We have now two cases: 1) when $\mathbf{a}^{l^m} = \|\overline x^{l^m}_{-i(l^m)} - \overline x^\star_{-i(l^m)}\|^2$ infinitely often; and 2) when  $\mathbf{a}^{l^m} = \|\overline x^{l^m} - \overline x^\star \|^2$ infinitely often. 

We can show that the first case is impossible. In order to ease the reading, we report the proof of this statement at the end of the proof. So we consider now that the sequence
\begin{align*}
  l_1^m &\in \arg\max_{k\in [k_m, k_{m+1})}  \mathbf{a}^{k} \text{ if  }  \mathbf{a}^{l^m}  = \left\|\overline x^{l^m} - \overline x^\star\right\|^2 ~~~\text{ and }~~~ l_1^m = l_1^{m-1} \text{ otherwise}
\end{align*}
and we have that $l_1^m \to \infty$ when $m\to \infty$. We can extract a subsequence $(s^m)$ of $(l_1^m)$ such that $(\overline x^{s^m} , x_1^{s^m}, .. , x_M^{s^m} )$ converges to  $(\overline x , x_1, .. , x_M )$ with $\overline x = \sum_{i=1}^M \pi_i x_i$. 
We are going to show that these points are the limits of all the sequences. Later, the associated $ x := \prox_{\gamma g}(\overline x)$ will also come into play.

We first observe that 
 \begin{align*}
  \mathbf{b} = \lim_{m\to \infty}   \mathbf{a}^{l_1^m}  = \lim_{m\to \infty}   \left\|\overline x^{l_1^m} - \overline x^\star\right\|^2 \leq \sum_{i=1}^M \pi_i   \limsup_{k\to \infty} \left\|x_i^{k} -  x_i^\star\right\|^2 \leq \mathbf{b}.
 \end{align*}
This tells us that
\begin{align*}
   \mathbf{b} =  \left\|\overline x  - \overline x^\star\right\|^2  =   \left\| \sum_{i=1}^M \pi_i    (x_i -  x_i^\star) \right\|^2  \leq \sum_{i=1}^M \pi_i    \left\|x_i -  x_i^\star\right\|^2 \leq  \mathbf{b} 
\end{align*}
and this inequality can only be satisfied if  $ x_i - x_i^\star = x_j - x_j^\star $ for any $i,j$ by direct computation (see e.g. \cite[Lemma 2.13]{bauschke2011convex}). Thus,
\begin{align}
\label{eq:lim_of_xi}
    x_i-x_i^\star=\sum_{j=1}^M\pi_j (x_j-x_j^\star)=\overline{x}-\overline x^\star
\end{align}
which leads to
\begin{align}
\label{eq:x_minus_xi_convergence}
    \overline{x}^{s^m} - x_i^{s^m} \rightarrow \overline x - x_i = \overline x^\star-x_i^\star.
\end{align}

We turn now our attention to convergence of gradients at times ${s^m}$. Rearranging~\eqref{eq:f} and taking the limit, we get first
\begin{align*}
    \limsup_m \left\|\nabla f_i(z_i^{s^m}) - \nabla f_i(x^\star) \right\|^2
    &\le \frac{1}{\omega_i}\left(\lim_m \mathbf{b}^m - \lim_m \left\|x_i^{s^m} - x_i^\star\right\|^2\right) = 0
\end{align*}
 where $z_i^{s^m}$ is such that $x_i^{s^m} = z_i^{s^m} - \gamma_i \nabla f_i(z_i^{s^m})$ and $ \omega_i =  \gamma_i (  2/L_i  - \gamma_i )$. Thus, 
 \begin{align}
     \label{eq:gradc}
     \nabla f_i(z_i^{s^m}) \to \nabla f_i(x^\star)
 \end{align}
 and by definition we get 
 \begin{align*}
     z_i^{s^m} = x_i^{s^m} + \gamma_i\nabla f_i (z_i^{s^m}) \to  x_i + \gamma_i \nabla f_i(x^\star).
 \end{align*}
Define for each $i$ and $k$ vector $w_i^k$ as the one used to get $z_i^k$, i.e.\;$ z_i^k = \prox_{\gamma g}\left(w_i^k\right)$. Using the firm non-expansiveness of the proximal operator $g$ (see \cite[Lemma 12.27]{bauschke2011convex}), we obtain
\begin{align*}
    \left\|x_i^k - x_i^\star\right\|^2      &\le \left\|z_i^k - x^\star\right\|^2 = \left\|\prox_{\gamma g}(w_i^k) - \prox_{\gamma g}(\overline x^\star\right)\|^2 \nonumber \\
    &\le \left\|w_i^k - \overline x^\star\right\|^2 - \left\|z_i^k  - w_i^k - \left(x^\star - \overline x^\star\right)\right\|^2 \nonumber \\
    &\le \mathbf{a}^{k-D_i^k} - \left\|z_i^k  - w_i^k - \left(x^\star - \overline x^\star\right)\right\|^2,  \label{eq:z_i}
\end{align*}
which yields 
\begin{align}
\limsup_m & \left\|z_i^{s^m}  - w_i^{s^m} - \left(x^\star - \overline x^\star\right)\right\|^2     \le \lim_m \mathbf{b}^{m} - \lim_m \left\|x_i^{s^m} - x_i^\star\right\|^2 = 0. \label{eq:zw}
\end{align}
This yields in turn, by \eqref{eq:lim_of_xi}, as $ w_i^{s^m}  - z_i^{s^m} + \left(x^\star - \overline x^\star\right) \rightarrow 0$, that
\begin{align*}
    w_i^{s^m} &\rightarrow x_i + \gamma_i \nabla f_i(x^\star) -\left(x^\star - \overline x^\star\right) = (x_i^\star + \overline x - \overline x^\star) + \gamma_i\nabla f_i(x^\star) - \left( x^\star - \overline x^\star \right)\nonumber = \overline x.
\end{align*}

To finish the proof, we consider the point $ x = \prox_{\gamma g}(\overline x)$, and we observe that the non-expansiveness  of $\prox_{\gamma g}$ gives
\begin{align*}
    \left\|  x^{s^m} -  x\right\|&= \left\|\prox_{\gamma g}(\overline x^{s^m}) - \prox_{\gamma g}(\overline x)\right\| \le \left\|\overline x^{s^m} - \overline x\right\|\rightarrow 0 \\
    \text{and } ~~ \left\|  z_i^{s^m} -  x\right\|&= \left\|\prox_{\gamma g}(\overline w_i^{s^m}) - \prox_{\gamma g}(\overline x)\right\| \le \left\|\overline w_i^{s^m} - \overline x\right\|\rightarrow 0 \text{ for any } i.
\end{align*}
Therefore, the $L_i$-Lipschitz continuity of $\nabla f_i$ gives for any $i$,
\begin{align*}
    \left\| \nabla f_i(z_i^{s^m}) -  \nabla f_i( x)  \right\|&\le L_i\left\| z_i^{s^m} - x\right\|\rightarrow 0
\end{align*}
so $\nabla f_i( x) = \nabla f_i(x^\star)$ using~\eqref{eq:gradc}.
Finally, \eqref{eq:zw} also gives us that $  \lim_m \|z_i^{s^m}  - w_i^{s^m} - (x^\star - \overline x^\star ) \|^2  =    \| x - \overline x -  (x^\star - \overline x^\star ) \|^2  = 0$ so $x - \overline x  = x^\star - \overline x^\star $.

Thus, for any $i$, we get from the definitions of $x^\star, \overline x^\star, x_i^\star = x^\star - \gamma_i \nabla f_i(x^\star) $ and the characterization $x = \prox_{\gamma g }(\overline x) \Leftrightarrow x + \gamma \partial g(x) \ni \overline{x}$ that
\begin{align*}
    \gamma \nabla f( x) &= \sum_{i=1}^M \pi_i \gamma_i \nabla f_i( x) =   \sum_{i=1}^M \pi_i \gamma_i \nabla f_i( x^\star ) =  \sum_{i=1}^M \pi_i (  x^\star - x_i^\star ) \\
    &= x^\star - \overline{x}^\star = x - \overline{x} \in \gamma \partial g(x) 
\end{align*}
thus $ 0 \in \partial (f+g)(x) $.
We can conclude by using the unique minimizer assumption on $f+g$: we get indeed that $x = x^\star$, so $\overline x = \overline x^\star$. This leads to 
\begin{align*}
    \mathbf{b} = \lim_m \left\|\overline x^{s^m} - \overline x^\star\right\|^2  = 0
\end{align*}
which directly implies that $x^k \to x^\star$, and ends the proof. Note that we use the fact that we are in the case of unique minimizer only here for the final conclusion.
\bigskip

\noindent
\emph{Proof of the statement that  $l_1^m \not\to \infty$ when $m\to \infty$ is impossible.}   

\noindent
In this case, we have $ \limsup_{m\to \infty}   \|\overline x^{l^m} - \overline x^\star \|^2  < \mathbf{b}$. Introducing 
\[
l_2^m \in \arg\max_{k\in [k_m, k_{m+1})}  \mathbf{a}^{k} \text{ if  }  \mathbf{a}^{l^m}  = \left\|\overline x^{l^m}_{-i(l^m)} - \overline x^\star_{-i(l^m)}\right\|^2 ~~~\text{ and }~~~ l_2^m = l_2^{m-1} \text{ otherwise}
\]
we have that $l_2^m\to\infty$. We also have
  \begin{align*}
  \mathbf{b} = \lim_{m\to \infty}   \mathbf{a}^{l_2^m}  = \lim_{m\to \infty}   \left\|\overline x_{-i(l_2^m)}^{l_2^m} - \overline x_{-i(l_2^m)}^\star\right\|^2 \leq  \mathbf{b};
 \end{align*}
and we are going to show that it leads to a contradiction.

We extract a subsequence $(s_2^m)$ from $(l_2^m)$ such that $i(s_2^m) = i$ is fixed and and $(\overline x^{s_2^m} , x_1^{s_2^m}, .. , x_M^{s_2^m} )$ converge to  $(\overline x , x_1, .. , x_M )$. Using $ \|\overline x_{-i}^{s_2^m} - \overline x_{-i}^\star \|^2 \to  \mathbf{b} $, one can repeat the arguments of the other case to prove that for any $j, \ell\neq i$
\begin{equation}\label{eq:i}
x_j - x_j^\star =  x_\ell  - x_\ell^\star.
\end{equation}
We would like to have this property for another $i'\neq i$, so that we would have equality of all the $x_j - x_j^\star$ which would yield
\begin{align*}
\label{eq:contradiction}
    \left\|\overline x - \overline x^\star\right\|^2 = \left\|\Sumj\pi_j \left( x_j - x_j^\star\right)\right\|^2 = \Sumj\pi_j \left\| x_j - x_j^\star\right\|^2 = \mathbf{b}
\end{align*}
and then contradicts $ \limsup_{m\to \infty}   \|\overline x^{l^m} - \overline x^\star \|^2  < \mathbf{b}$. 

We have left to prove the existence of this second machine $i'$ with the same property \eqref{eq:i}.
If the machine $i$ is the only machine that is making updates infinitely many times on times $s_2^m$, we have that for any $j\neq i$, $\|x_j^{s_2^m} - x_j^\star \|^2 \to \mathbf{b}$. From Lemma~\ref{lem:main}, it follows that $  \|\overline x_{-j}^{s_2^m-D_j^{s_2^m}} - \overline x_{-j}^\star \|^2 \to \mathbf{b}$ so we can unite the two sequences $(s_2^m)$ and $(s_2^m-D_j^{s_2^m})$ to get a new sequence with the same properties but two slaves making updates infinitely many times. Without loss of generality, we then have that at least workers $i$ and $i'$ and then we get \eqref{eq:contradiction}, and the contradiction follows.
\end{proof}}

Besides convergence, we can also establish the rate of our algorithm in the general case, showing that it matches the one of vanilla gradient descent along the epoch sequence. The proof of this result is reported in Appendix~\ref{apx:rate}.

\begin{theorem}[Rate of convergence]
\label{gradient_rates_theorem}
Let the functions $(f_i)$ be convex $L_i$-smooth and $g$ be convex lsc. Then, for  $\gamma_i\in(0,2/L_i)$ and any $k\in [k_m, k_{m+1})$
    \begin{align*}
        \min_{\substack {k'\le k}}~~\|\partial F(x^{k'})\| \le \frac{2\sqrt{2}}{\sqrt m}\frac{\max_i \left\|x_i^0 - x_i^\star\right\|}{\min_j\left(\gamma_j\sqrt{2 - \gamma_j L_j} \right)},
    \end{align*}
    where $\|\partial F(x^{k'})\|\coloneqq \min_{h\in \partial F(x^{k'})}\|h\|$.  
\end{theorem}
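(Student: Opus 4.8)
The plan is to mimic the classical $O(1/\sqrt{m})$ proof for proximal gradient descent, but working along the epoch sequence and summing the per-epoch decrease in the Lyapunov quantity $\mathbf{b}^m$ defined in~\eqref{eq:bk}. The key already available is the second (non-strongly-convex) inequality of Lemma~\ref{lem:main}: for any $i$ and any $k\in[k_m,k_{m+1})$,
\[
\left\|x_i^k - x_i^\star\right\|^2 \le \mathbf{a}^{k-D_i^k} - \gamma_i\Bigl(\tfrac{2}{L_i} - \gamma_i\Bigr)\left\|\nabla f_i(z^{(p)}) - \nabla f_i(x^\star)\right\|^2 .
\]
First I would average this over $i$ with weights $\pi_i$ (and likewise for the $-j$ averages), exactly as in the proof of Theorem~\ref{general_conv_thm}, to obtain not just $\mathbf{a}^k \le \max_i \mathbf{a}^{k-D_i^k}$ but a \emph{quantitative} version
\[
\mathbf{a}^k \le \max_i \mathbf{a}^{k-D_i^k} - c\sum_{i=1}^M \pi_i \left\|\nabla f_i(z_i^{(p)}) - \nabla f_i(x^\star)\right\|^2,\qquad c := \min_j \gamma_j\Bigl(\tfrac{2}{L_j}-\gamma_j\Bigr),
\]
valid for the appropriate index set (one has to be a little careful that the gradient term is kept for both the full average and each $-j$ average, so it survives the $\max$). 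Telescoping this across one epoch, as in Theorem~\ref{general_conv_thm}, gives $\mathbf{b}^m \le \mathbf{b}^{m-1} - c\,\delta_m$ where $\delta_m$ is (a lower bound on) the squared gradient-difference norm attained at some iterate inside epoch $m$; summing over $m$ from $1$ to $M$ yields $\sum_{m=1}^{M}\delta_m \le \mathbf{b}^0/c \le \max_i\|x_i^0-x_i^\star\|^2/c$, hence $\min_{m\le M}\delta_m \le \max_i\|x_i^0-x_i^\star\|^2/(cM)$.

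The remaining work is to convert a small value of $\|\nabla f_i(z^{(p)})-\nabla f_i(x^\star)\|$ at the inner iterate $z^{(p)}$ into a small subgradient of $F = \frac1M\sum f_i + g$ at the \emph{master output} $x^{k'} = \prox_{\gamma g}(\overline x^{k'})$. For this I would use the $\prox$-characterization $x = \prox_{\gamma g}(w) \Leftrightarrow (w-x)/\gamma \in \partial g(x)$ together with the identity $\pi_i\gamma_i = \gamma/M$. Since each $z_i^{(q)}$ is itself a $\prox_{\gamma g}$ of some $w_i^{(q)}$, the quantity $\frac1\gamma\sum_i \pi_i\gamma(\text{(something)}) + \nabla f(\cdot)$ can be assembled into an element of $\partial F$ at a point close to $x^{k'}$; the firm-nonexpansiveness estimate used in Theorem~\ref{general_conv_thm} (the bound involving $\|z_i^k - w_i^k - (x^\star-\overline x^\star)\|^2$) controls how far $z^{(p)}$, $w^{(p)}$ and $x^{k'}$ are from each other in terms of the same Lyapunov decrease, and $L_i$-smoothness transfers the gradient bound between these nearby points. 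Collecting constants, each contributing term costs a factor $2$ (one from splitting $\|a+b\|\le\|a\|+\|b\|$ into the gradient part and the $\prox$-displacement part, one from $\mathbf{b}^m-\mathbf{b}^{m-1}$ bounding both), which is where the $2\sqrt2$ in the statement comes from, and the denominator $\min_j(\gamma_j\sqrt{2-\gamma_j L_j})$ is $\sqrt{c}$ after rescaling $\gamma_j(2/L_j-\gamma_j) = \gamma_j^2(2/(\gamma_j L_j)-1)$... here one must track whether the intended normalization is $\|\partial F\|$ with the $\frac1M\sum$ convention, which rescales $\gamma_i(2/L_i-\gamma_i)$ to $\gamma_i^2(2-\gamma_iL_i)/(\gamma_iL_i)$ type expressions; I would pin this down by carrying the constant symbolically until the end.

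The main obstacle is the second paragraph: cleanly exhibiting an explicit subgradient $h\in\partial F(x^{k'})$ whose norm is bounded by (a constant times) $\sqrt{\delta_m}$ over a whole block of iterates, rather than only at the single inner iterate where $\delta_m$ is attained. The technical crux is that the inner loop uses a \emph{shifted} starting point $\overline x^{k-D_i^k}+\Delta^{(q-1)}$ and that $x^{k'}$ for $k'$ ranging over $[k_m,k_{m+1})$ is not literally any of the $z_i^{(q)}$; one needs the non-expansiveness chain from the proof of Theorem~\ref{general_conv_thm} to argue all these points lie within $\sqrt{2(\mathbf{b}^{m-1}-\mathbf{b}^{m})}$-ish of each other, and then assemble $\partial g$ and $\nabla f$ pieces at $x^{k'}$ using $\pi_i\gamma_i = \gamma/M$. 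Everything else — averaging, telescoping, extracting the min over $m\le M$, and the final $1/\sqrt{M}$ — is routine once the quantitative Lemma~\ref{lem:main} bound is summed. I would therefore structure the proof as: (1) quantitative one-iteration bound on $\mathbf{a}^k$; (2) telescoping to $\mathbf{b}^m \le \mathbf{b}^{m-1} - c\,\delta_m$; (3) summation and pigeonhole; (4) the $\prox$/smoothness argument turning $\delta_m$ into $\|\partial F(x^{k'})\|^2$; (5) bookkeeping of constants.
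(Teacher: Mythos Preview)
Your high-level plan (quantitative per-epoch decrease of $\mathbf{b}^m$, telescoping, pigeonhole, then assembling an explicit element of $\partial F(x^{k'})$) is exactly the paper's strategy, but two of your steps hide real difficulties that the paper has to work for.

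\medskip

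\textbf{The ``survives the $\max$'' parenthetical is the actual crux.} You write that the full gradient sum $\sum_i\pi_i\|\nabla f_i(z_i)-\nabla f_i(x^\star)\|^2$ can be kept when passing from $\|\overline x^k-\overline x^\star\|^2$ and $\|\overline x^k_{-i(k)}-\overline x^\star_{-i(k)}\|^2$ to $\mathbf{a}^k$. It cannot: the $-i(k)$ average simply does not contain the $i(k)$-th term, so the quantity subtracted on that branch is only $g_{-i(k)}^k$ (and similarly $r_{-i(k)}^k,\,o_{-i(k)}^k$). Hence a one-epoch inequality $\mathbf{b}^m\le \mathbf{b}^{m-1}-c\,\delta_m$ with $\delta_m$ the \emph{full} sum is not available. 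The paper's fix is to unroll one more step: from $\|\overline x^k_{-i}-\overline x^\star_{-i}\|^2$ you land at a time $k'$ where the updating worker is some $j(k')\neq i$, and the excluded index there is $j(k')$; combining the two exclusions (distinct indices) recovers a lower bound by the full quantity $s^k=\min_{k'\le k}(r^{k'}+g^{k'}+o^{k'})$. This yields a \emph{two}-epoch recursion $\mathbf{b}^m\le \mathbf{b}^{m-2}-s^k$ (their Lemma~\ref{e_m_lemma}), applied $m/2$ times. The factor $2$ from this halving is exactly one of the two $2$'s in the $2\sqrt2$; your accounting of constants, based on a one-epoch telescope, would not land on the stated bound.

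\medskip

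\textbf{All three residuals must be small at the \emph{same} iterate.} In your plan you telescope only the gradient residual and postpone the prox-displacement and variance residuals to step~(4), saying they are ``controlled by the same Lyapunov decrease''. But the subgradient you want to bound at $x^{k'}$ decomposes (after the paper's explicit choice $h^{k'}=(\overline x^{k'}-x^{k'})/\gamma+\nabla f(x^{k'})\in\partial F(x^{k'})$) into contributions governed by $r^{k'}$, $o^{k'}$ and $g^{k'}$ simultaneously. If you telescope them separately you only get that each is small at \emph{some} $k'$, not the same one. The paper avoids this by carrying the sum $r^k+g^k+o^k$ through the telescoping from the outset and defining $s^k$ as the running minimum of that sum; the final step then bounds $\|h^{k'}\|^2$ by a constant times $s^k$ via a three-term split $\|a+b+c\|^2\le (2+\delta_i)(\|a\|^2+\|b\|^2)+(1+2\delta_i^{-1})\|c\|^2$ with the specific choice $\delta_i=\gamma_i/(2/L_i-\gamma_i)$ that produces the $\min_j(\gamma_j\sqrt{2-\gamma_jL_j})$ in the denominator.

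\medskip

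In short: your outline is the right one, but step~(2) as stated fails and must be replaced by a two-epoch recursion that handles the missing $i(k)$-th term, and the telescoped quantity must be the sum $r^k+g^k+o^k$, not the gradient residual alone.
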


\subsection{Comparison of the results with the literature}
\label{sec:comp_delay}

The main feature of the epoch sequence introduced in Section\;\ref{sec:epoch} is that it automatically adapts to variations of behaviors of machines across time (such as one worker being slow at first that gets faster with time). The sequence then allows for a intrinsic convergence analysis without any knowledge of the delays, as shown in the previous sections. This simple but powerful remark is one of the main technical contributions of this paper.
\modif{
For comparisons with the literature, the following result provides explicit connections between number of iterations and number of epochs with two standard bounds on delays uniformly in time\footnote{A notable exception allowing for potentially unbounded delays is the preprint\;\cite{duchi2015asynchronous} (more precisely Assumption A). However, in that paper the delays are seen as random variables and bounded in $L^p$ and thus differ from the deterministic treatment we propose.}.}

\begin{proposition}[epoch scaling with delays]\label{prop:scaling}
For $M>1$ machines\modif{\footnote{For $M=1$ machine, we have $k_m=m$ as mentioned in Section~\ref{sec:epoch} and we recover exactly the convergence rates of the vanilla proximal gradient.}}, uniformly over time:
\begin{itemize}
    \item if the delays are uniformly bounded by $d$ over the workers, i.e.\;$d_i^k \le d$ for all\;$i$, then $d\ge M $ and the epoch sequence has complexity $k_m = \mathcal{O}(m M)$;
    \item if the average delay is bounded by $\overline{d}$, i.e.\;$1/M \sum_{i=1}^M d_i^k \le \overline{d}$, then $\overline{d}\ge (M-1)/2$ and the epoch sequence has complexity $k_m = \mathcal{O}(m M)$.
\end{itemize}
\end{proposition}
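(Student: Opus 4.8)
I would bound the length $k_{m+1}-k_m$ of a generic epoch and, separately, certify the stated lower bounds on $d$ and $\overline{d}$; the complexity estimate then follows by telescoping $k_m=\sum_{\ell=0}^{m-1}(k_{\ell+1}-k_\ell)$ from $k_0=0$.

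Start with the lower bounds. Under either boundedness hypothesis every worker must update infinitely often (otherwise some $d_j^k$, hence also the maximum and the average delay, would diverge), so there is a time $k$ by which all $M$ workers have updated at least once. Since exactly one worker updates per iteration, the $M$ most-recent-update times $k-d_1^k,\dots,k-d_M^k$ are then pairwise distinct integers, so $(d_i^k)_{i=1}^M$ is a set of $M$ distinct nonnegative integers; consequently $\max_i d_i^k\ge M-1$ and $\sum_i d_i^k\ge 0+1+\cdots+(M-1)=M(M-1)/2$, and evaluating the hypotheses at this $k$ yields the stated lower bounds on $d$ and on $\overline{d}$.

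Next — the crux — the per-epoch bound. In the uniformly bounded case I would show that every worker updates at least once in any block of $d+1$ consecutive iterations: were worker $j$ idle throughout $[a+1,a+d+1]$, its last update would lie at some time $\le a$, forcing $d_j^{a+d+1}\ge d+1$ and contradicting $d_j^k\le d$. Applying this to the consecutive blocks $[k_m+1,k_m+d+1]$ and $[k_m+d+2,k_m+2d+2]$ shows that each worker updates at least twice inside $[k_m,k_m+2(d+1)]$, so $k_{m+1}-k_m\le 2(d+1)$. In the average-bounded case the same doubling scheme works once the length of an idle stretch is controlled: if worker $j$ is idle on $[k_m+1,k_m+T]$ then $d_j^{k_m+T}\ge T$, while the other $M-1$ workers have distinct most-recent-update times and so contribute at least $0+1+\cdots+(M-2)=(M-1)(M-2)/2$ to $\sum_i d_i^{k_m+T}$; the hypothesis $\sum_i d_i^{k_m+T}\le M\overline{d}$ then forces $T\le M\overline{d}-(M-1)(M-2)/2$, which with $\overline{d}=(M-1)/2+d$ simplifies to $T\le (M-1)+Md$. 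Hence each worker updates at least once per block of $\lceil(M-1)+Md\rceil+1$ iterations, and the two-block argument again yields $k_{m+1}-k_m=\mathcal{O}(M(1+d))$.

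Telescoping gives $k_m\le 2m(d+1)$ in the first case and $k_m=\mathcal{O}(mM(1+d))$ in the second; since by the lower bounds $d$ is already of order $M$ (and the excess $d$ over the round-robin minimum is the only free parameter in the average-delay bound), both collapse to $k_m=\mathcal{O}(mM)$ in the natural regime where the delays exceed the round-robin pattern only by a constant factor, resp.\ a constant. For a matching lower bound one notes that each epoch spans $\Omega(M)$ iterations (at least $2M$ fresh updates are needed, the $M$ workers updating twice each), so $k_m=\Omega(mM)$ and the scaling $k_m=\Theta(mM)$ is tight. The main obstacle is the average-delay case: a bound on the mean delay, unlike a uniform bound, does not on its own prevent a single worker from staying silent for a long time, so one must use the converse — the other $M-1$ workers cannot all have small delays at once because their most-recent-update times are distinct, and this unavoidable crowding (worth $(M-1)(M-2)/2$ in the delay sum) is exactly what caps an idle stretch at $\mathcal{O}(M(1+d))$; a secondary care point is bookkeeping the constants so the final estimate genuinely reduces to $\mathcal{O}(mM)$ rather than $\mathcal{O}(mMd)$.
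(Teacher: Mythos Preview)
Your argument is correct. The lower bounds on $d$ and $\overline{d}$ and the uniform-delay epoch bound proceed exactly as in the paper (same two-block ``every worker updates in any window of $d+1$ steps'' idea, up to an inessential off-by-one in the constant). The genuinely different part is the average-delay case. The paper bounds $N=k_{m+1}-k_m$ by summing $\sum_j d_j^k$ over all times $k\in[k_m,k_{m+1})$: for the critical worker $i=i(k_{m+1})$ this sum of an arithmetic progression is $\Omega(N^2)$, while the remaining $M-1$ workers contribute at least $N\cdot(M-1)(M-2)/2$; comparing with the trivial upper bound $MN\overline{d}$ yields a quadratic inequality in $N$. You instead look at a \emph{single} time $k_m+T$ at the end of a putative idle stretch of one worker and use the crowding bound $(M-1)(M-2)/2$ only once, which directly caps $T$ and then feeds into the same two-block argument as in the uniform case. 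Your route is more elementary (no time-summation, no quadratic to solve) and in fact gives the sharper constant $k_{m+1}-k_m\le 2M(1+\tau)$ against the paper's $4M(1+\tau)-3$; the paper's approach, on the other hand, exploits more of the information in the average bound (the whole time window rather than one snapshot), which could be useful under weaker hypotheses such as a time-averaged rather than per-time bound on $\frac{1}{M}\sum_i d_i^k$.
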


The proof of this proposition is basic and reported in Appendix~\ref{apx:delays}. The detailed results are summarized in the following table.

\renewcommand{\arraystretch}{1.5}

\smallskip

\noindent\resizebox{\columnwidth}{!}{
\begin{tabular}{l|c|c}
&  uniform bound & average bound   \\
\hline
Condition & $d_i^k \le d$ for all $i$  & $\frac{1}{M} \sum_{i=1}^M d_i^k \le \overline{d} $ \\
\hline
Unimprov.\;bound & $d = M  + \tau ; ~~ \tau \ge 0$ & $\overline{d} = \frac{M - 1}{2} + \tau;   ~~ \tau \ge 0$\\
\hline
1 Epoch  & $k_{m+1}-k_m \le 2d+1  $ & $k_{m+1}-k_m \le 2M(2\overline{d}-M +3) -3  $ \\
\hline
Epoch sequence & $k_m \le (2M + 2\tau  +1)m $ & $k_m  \le 4M(\tau + 1)m $
\end{tabular}}

\medskip
Bounding the average delay among the workers is an attractive assumption which is however much less common in the literature. The defined epoch sequence and associated analysis subsumes this kind of assumption.

In the case of uniformly bounded delays, the derived link between epoch and time sequence  enables us to compare our rates in the strongly convex case (Theorem~\ref{thm:strong}) with the ones obtained for PIAG \cite{aytekin2016analysis,vanli2016global,vanli2016stronger}. To simply the comparison, let us consider the case where all the workers share the same strong convexity and smoothness constants $\mu$ and $L$. The first thing to notice is that the admissible stepsize for PIAG depends on the delays' uniform upper bound $d$ which is practically concerning, while the usual proximal gradient stepsizes are used for the proposed DAve-RPG. Using the optimal stepsizes in each case, the convergence rates in terms of time $k$ are:
\begin{center}
\begin{tabular}{c|c|c}
 & DAve-RPG    &  PIAG \\
 \hline
Reference  & Th.~\ref{thm:strong}   &  Th.~3.4 of \cite{vanli2016stronger} \\
Stepsize  & $\gamma = \frac{2}{\mu+L}$   &  $\gamma = \frac{16}{\mu}\left[(1+\frac{\mu}{48 L} )^{\frac{1}{d+1}} -1 \right]$ \\
Rate & $\left( 1 - \frac{2}{1+\frac{L}{\mu}} \right)^{\frac{k}{d+0.5}} $   &  $\left( 1 - \frac{1}{49\frac{L}{\mu}} \right)^{\frac{k}{d+1}} $ \\
\end{tabular}    
\end{center}

\smallskip

We notice in both cases the exponent inversely proportional to the maximal delay $d$ but the term inside the parenthesis is a hundred times smaller for PIAG. Even if our algorithm is made for handling the flexible delays, this comparison illustrates the interest of our approach over PIAG for distributed asynchronous optimization in the case of bounded delays. 

\section{Numerical Illustrations}
\label{sec:experiments}

In this section, we run some numerical experiments to illustrate the behavior of our algorithm in the general convex case: we compare with the synchronous version and state-of-the-art method PIAG; we also point out the effect of repeated local iterations. 
\modifTwo{These experiments complement the ones of the companion short paper \cite{icml2018} which presents results for strongly convex function, different worker loads, and increasing number of machines.}

We consider the problem of minimizing the logistic loss with the $\ell_1$ and 
$\ell_2$-regularization on a dataset split among the workers. The problem reads
\begin{align*}
\min_{x\in\mathbb{R}^n} \frac{1}{M} \sum_{i=1}^M \underbrace{\sum_{j\in\mathcal{S}_i} \log\left(1 + \exp(-b_j a_j^\top x)\right) + \frac{\lambda_2}{2}\|x\|_2^2 }_{f_i(x)}  + \lambda_1 \|x\|_1,
\end{align*}
where for each example $j$, the pair $(a_j, b_j)$ represents the features $a_j\in\mathbb{R}^n$ together with the corresponding label $b_j\in\{-1,1\}$; and $\mathcal{S}_i$ represents the examples stored locally at machine $i$; the total number of examples is denoted by $m$.

The experiments were run on a CPU cluster, one core corresponding to one worker. Each core had 4 GB of memory and used one thread to produce updates. The code was written in Python using standard libraries only. The datasets used for the experiments are Criteo ($n=1,000,000$, $m=45,840,617$), URL ($n= 3,231,961$, $m= 2,396,130$), and KDDA ($n=20,216,830$, $m= 8,407,752$) from the LIBSVM datasets library \cite{chang2011libsvm}. 

\begin{figure*}[ht!]
\begin{tikzpicture}[scale=1]
\begin{groupplot}[group style={group name=plot2, group size= 2 by 2},width=\textwidth]

\nextgroupplot[ 
 width=0.45\columnwidth, 
 height=0.3\columnwidth, 
 xmin=0, 
xmax= 5000,
 xmajorgrids, 
ymode=log,
 yminorticks=true, 
 ymajorgrids, 
 yminorgrids, 
 ylabel={Suboptimality },
 xlabel={Wallclock time (s)},
 ]

 \addplot [  each nth point=3, filter discard warning=false, unbounded coords=discard, 
 color=blue, 
 solid, 
 line width=2.0pt, 
 mark size=2.5pt, 
 mark=none
 ] 
file {  num_res/criteo/dave_rpg.csv };
 \label{plots1:dave}

 \addplot [  each nth point=3, filter discard warning=false, unbounded coords=discard, 
 color=gray, 
 solid, 
 line width=2.0pt, 
 mark size=2.5pt, 
 mark=none,
 ] 
file {  num_res/criteo/synch_gd.csv };
 \label{plots1:gd}

 \addplot [  each nth point=3, filter discard warning=false, unbounded coords=discard, 
 color=green!40!black, 
 solid, 
 line width=2.0pt, 
 mark size=2.5pt, 
 mark=none
 ] 
file {  num_res/criteo/piag.csv };
 \label{plots1:piag}

\nextgroupplot[ 
 width=0.45\columnwidth, 
 height=0.3\columnwidth, 
xmin = 0,
xmax= 5000,
 xmajorgrids, 
 ymode=log,
 yminorticks=true, 
 ymajorgrids, 
 yminorgrids, 
 xlabel={Wallclock time (s)},
 ]

 \addplot [  each nth point=3, filter discard warning=false, unbounded coords=discard, 
 color=blue, 
 solid, 
 line width=2.0pt, 
 mark size=2.5pt, 
 mark=none
 ] 
file {  num_res/url/dave_rpg.csv };

 \addplot [  each nth point=3, filter discard warning=false, unbounded coords=discard, 
 color=gray, 
 solid, 
 line width=2.0pt, 
 mark size=2.5pt, 
 mark=none
 ] 
file {  num_res/url/synch_gd.csv };

 \addplot [  each nth point=3, filter discard warning=false, unbounded coords=discard, 
 color=green!50!black, 
 solid, 
 line width=2.0pt, 
 mark size=2.5pt, 
 mark=none
 ] 
file { num_res/url/piag.csv };

\end{groupplot}

\node[anchor=north] at ($(plot2 c1r1.south)+(0,-1.0)$) { \small (a) Criteo };

\node[anchor=north] at ($(plot2 c2r1.south)+(0,-1.0)$) { \small (b) URL };

\path (plot2 c1r1.north west|-current bounding box.north)--
      coordinate(legendpos)
      (plot2 c2r1.north east|-current bounding box.north);
\matrix[
    matrix of nodes,
    anchor=south,
    draw,
    inner sep=0.2em,
    draw
  ]at([yshift=1ex]legendpos)
  {
    \ref{plots1:dave}& \texttt{DAve-RPG} $p=1$ &[5pt]
\ref{plots1:gd}& Synchronous PG &[5pt]
\ref{plots1:piag}&   PIAG \\
};
\end{tikzpicture}
\caption{Performance on general convex functions ($\lambda_2=0$).}
\label{fig:comp}
\end{figure*}
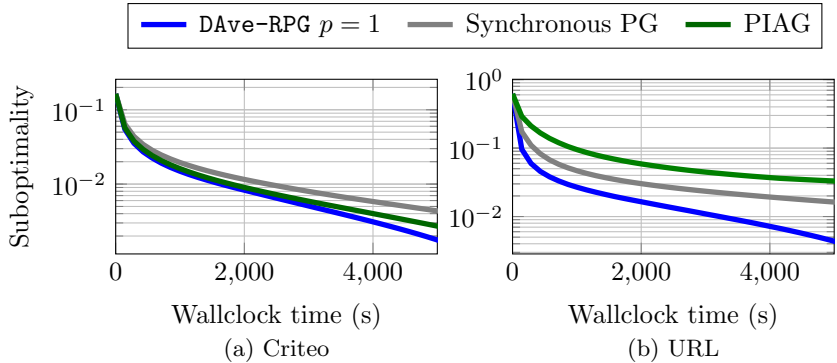

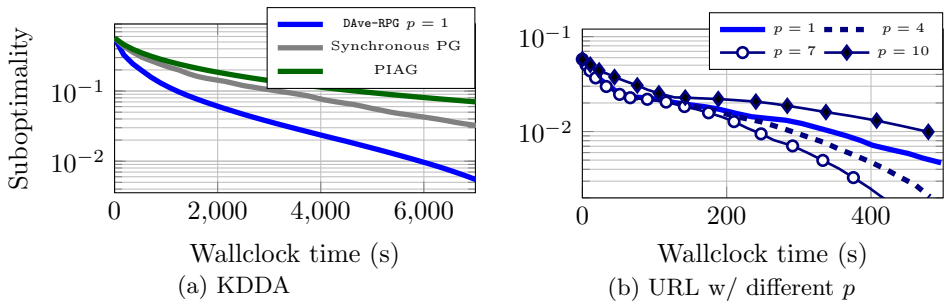
\begin{figure*}[ht!]
    \centering
    \begin{subfigure}[t]{0.49\textwidth}
        \centering
        \begin{tikzpicture}[scale=1]

\begin{axis}[ 
 width=1.0\columnwidth, 
 height=0.6\columnwidth, 
 xmin=0, 
 xmax = 7000,
 xmajorgrids, 
ymode=log,
 yminorticks=true, 
 ymajorgrids, 
 yminorgrids, 
 ylabel={Suboptimality },
 xlabel={Wallclock time (s)},
 legend style={font=\tiny},
 legend style={at={(1.0,1.1)}}
 ]

 \addplot [  each nth point=3, filter discard warning=false, unbounded coords=discard, 
 color=blue, 
 solid, 
 line width=2.0pt, 
 mark size=2.5pt, 
 mark=none
 ] 
file {  num_res/kdda/dave_rpg.csv };
\addlegendentry{\texttt{DAve-RPG} $p=1$}

 \addplot [  each nth point=3, filter discard warning=false, unbounded coords=discard, 
 color=gray, 
 solid, 
 line width=2.0pt, 
 mark size=2.5pt, 
 mark=none,
 ] 
file {  num_res/kdda/synch_gd.csv };
\addlegendentry{Synchronous PG}

 \addplot [  each nth point=3, filter discard warning=false, unbounded coords=discard, 
 color=green!40!black, 
 solid, 
 line width=2.0pt, 
 mark size=2.5pt, 
 mark=none
 ] 
file {  num_res/kdda/piag.csv };
\addlegendentry{PIAG}

\end{axis}

\end{tikzpicture}
        \vspace*{-0.7cm}
        \caption{KDDA \label{fig:strong_kdda}}
    \end{subfigure}%
    ~ 
    \begin{subfigure}[t]{0.49\textwidth}
        \centering
        \begin{tikzpicture}[scale=1]

\begin{axis}[ 
legend columns=2, 
 width=1.0\columnwidth, 
 height=0.6\columnwidth, 
 xmin=0, 
 xmax=500, 
 xmajorgrids, 
 ymode=log,
 ymin=2e-3, 
 ymax=0.12, 
 yminorticks=true, 
 ymajorgrids, 
 yminorgrids, 
 xlabel={Wallclock time (s)},
 legend style={font=\tiny},
 legend style={at={(1.0,1.1)}}
 ]

 \addplot [ 
 color=blue, 
 solid, 
 line width=2.0pt, 
 mark size=2.5pt, 
 mark=none
 ] 
file {  num_res/diff_p/asynch_gd_1.txt };
\addlegendentry{$p=1$}

 \addplot [  
 color=blue!50!black, 
 dashed, 
 line width=2.0pt, 
 mark size=2.5pt, 
 mark=none
 ] 
file {  num_res/diff_p/asynch_gd_4.txt };
\addlegendentry{$p=4$}

 \addplot [ 
 color=blue!50!black, 
 solid, 
 line width=1.0pt, 
 mark size=2.0pt, 
 mark=*,
mark options={fill=white},
mark repeat={2}
 ] 
file {  num_res/diff_p/asynch_gd_7.txt };
\addlegendentry{$p=7$}

 \addplot [  
 color=blue!50!black, 
 solid, 
 line width=1.0pt, 
 mark size=2.5pt, 
 mark=diamond*,
mark options={fill=black},
mark repeat={2}
 ] 
file {  num_res/diff_p/asynch_gd_10.txt };
\addlegendentry{$p=10$}

\end{axis}

\end{tikzpicture}
        \vspace*{-0.25cm}
        \caption{URL w/ different $p$ \label{fig:strong_p}}
    \end{subfigure}
    \vspace*{-0.5cm}
    \caption{Performance on strongly convex functions.}
\end{figure*}

In Fig.~\ref{fig:comp}, we plot the suboptimality versus wallclock time for the proposed DAve-RPG with $p=1$, the usual synchronous proximal gradient, and PIAG \cite{aytekin2016analysis}. For each of the datasets, we use the first 100,000 features, and split evenly the examples over 50 workers. We take $\lambda_1 = 10^{-11}$ and $10^{-7}$ respectively and $\lambda_2 =0$ for both. As we do not use $\ell_2$-regularization, the problem is not strongly convex and the rate is not linear. However, it is clear that, just as the synchronous proximal gradient descent, DAve-RPG appears to converge with rate $O(\frac{1}{k})$, in line with Theorem \ref{gradient_rates_theorem}. For all algorithms, we used the maximal stepsize (for PIAG, we took the limit $\mu\rightarrow 0$ in \cite{aytekin2016analysis}). 
Even in this case where the workers have similar computational loads, the performance of DAve-RPG is clearly better than that of the synchronous gradient descent. DAve-RPG also outperforms PIAG, notably thanks to its robustness (as expected from Fig.~\ref{fig:2d_examples}).

In Fig.~\ref{fig:strong_kdda}, we use a non-zero $\ell_2$-regularization, leading to a strongly convex problem: we plot the suboptimality versus wallclock time for the proposed DAve-RPG with $p=1$, the usual synchronous proximal gradient, and PIAG for the KDDA dataset. We use the first 200,000 features, and split evenly the examples over 60 workers. In this, the performance gain brought by DAve-RPG is even more significant.
Finally, in Fig.~\ref{fig:strong_p},  we illustrate the repetition of local iterations: we plot the suboptimality versus wallclock time for the proposed DAve-RPG with $p=1, 4, 7, 10$ on the full URL dataset with $\lambda_1 = 10^{-6}$ and $\lambda_2=1/m$ split evenly over 100 workers. We see that a tradeoff appears between computation and communications/updates; in this particular case, the performance improves up to $p=7$ and then degrades afterwards.

\section{Conclusions}
\label{sec:conclusions}

This paper describes a novel algorithm for asynchronous distributed optimization. 
A key property of this algorithm is that it
does not require unrealistic assumptions on machine delays. It is based on two original algorithmic features. First, the master machine keeps a combination of the output of all the workers last repeated proximal gradient steps, whereas for most algorithms in the literature, the master performs a step using the last gradients computed by the workers. Second, the workers can freely choose how many proximal gradient repetitions they make, leading to scarcer exchanges and more flexible communications. 

These special features lead us to two key theoretical findings: i) an epoch-based analysis adapted to any kind of delays; and ii) the use of the same stepsizes as in the classical proximal gradient algorithm.
We proved the convergence of the algorithm in the general case and with a linear rate in the strongly convex case. Although long delays may slow down the algorithm, it still converges both in theory and in experiments without being biased by more frequently updating workers.

The analysis suggests that some of the provided ideas may be used if updates are performed differently. Just in the way the vanilla proximal-gradient algorithm and its analysis form a base for studying advanced methods, we believe that the proposed algorithm and its original analysis may serve for future works in distributed optimization.

\section*{Acknowledgments}

We thank Robert Gower for valuable comments on the first versions of this paper.

\bibliographystyle{siamplain}
\bibliography{dist_optim}

\appendix

\section{Proof of convergence in the general case}
\label{apx:gen}
\modif{This appendix completes the proof of Theorem\,\ref{general_conv_thm} given in the main text to lift the unique minimizer assumption using an additional boundedness assumption on delays and inner loops.}

Let $X^\star$ be the set of minimizers of \eqref{eq:pb}, and fix $x^\star \in X^\star$. We are going to show the existence of another minimizer $x\in X$ having properties controlled with the two additional assumptions.

We use first the additional assumption that the number of inner loops is uniformly bounded by $p<\infty$. We define sequence $\mathbf{a'}^k$ by
\begin{align*}
    \mathbf{a'}^k \coloneqq \beta^{p-1}\left\|\overline x^{k} - \overline x^\star\right\|^2 + \left(1 - \beta^{p-1}\right)\max\left(\left\|\overline x^{k} - \overline x^\star\right\|^2, \left\|\overline x^{k}_{-i(k)} - \overline x^\star_{-i(k)}\right\|^2\right) 
\end{align*}
with $\beta \coloneqq \min_i \pi_i$. Following~\eqref{eq:recursion_for_limited_p_and_d} in the proof of Lemma \ref{lem:main}, we still have the bound
\begin{align}
\label{eq:renewed_c_k_bound}
    \left\|x_i^k - x_i^\star\right\|^2 \le \mathbf{a'}^{k - D_i^k}.
\end{align}
Furthermore, \eqref{eq:bound_by_max_j} and \eqref{eq:bound_by_max_j_2} imply that 
$\mathbf{a'}^k \le \max_i \mathbf{a'}^{k - D_i^k}$.

We use now the additional assumption that $(D_i^k)$ are bounded by $D$. We introduce 
\begin{align*}
    \mathbf{e}^k := \max_{0\le d < D} \mathbf{a'}^{k + d}.
\end{align*}
for a fixed $k>0$, we write $k=mD + r$ with $m=\lfloor k/D \rfloor$ and $r=k-mD$. 
We can prove by induction (as in the proof of Theorem~\ref{thm:strong}), that for any $r\in[0,D]$ that
\begin{align*}
    \mathbf{e}^m_r := \max_{0\le d < D} \mathbf{a'}^{mD + r + d} \le \mathbf{e}^{m-1}_r.
\end{align*}
thus, have $ \mathbf{e}^m_r \rightarrow \mathbf{b}_r$ for some $\mathbf{b}_r$. In addition, we have for any $r$ and $r'> r$ that $\mathbf{e}^m_{r'} \le \max(\mathbf{e}^m_r, \mathbf{e}^{m+1}_r)$ as the latter maximum covers the interval of the former (see Fig.~\ref{fig:max_over_interval}) thus $\mathbf{b}_{r'} \le \mathbf{b}_r $. Similarly, we have $\max(\mathbf{e}^{m-1}_{r'}, \mathbf{e}^{m}_{r'})  \ge  \mathbf{e}^m_r$ which gives the reverse inequality; thus $\mathbf{b}_{r'} = \mathbf{b}_r = \mathbf{b}$.
\begin{figure}[h!]
\centering
    \includegraphics[scale=0.25]{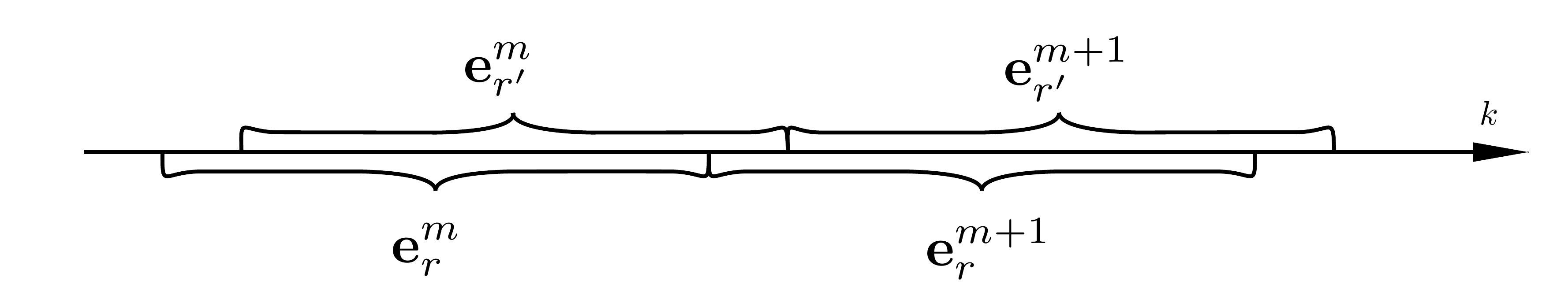}
\caption{Maxima over covering intervals of times}
\label{fig:max_over_interval}
\end{figure}

Thus we have that the sequence $(\mathbf{e}^k)$ is the union of $D$ sequences converging to $\mathbf{b}$ and thus converges itself to $b$.
Moreover, using \eqref{eq:renewed_c_k_bound}, we get that for any $i=1,..,M$
\begin{align*}
&\limsup_{k\rightarrow\infty}\left\| x_i^k - x_i^\star\right\|^2\le \mathbf{b}, \\
&\limsup_{k\rightarrow\infty}\left\|\overline x^k - \overline x^\star\right\|^2\le \mathbf{b}, ~~~\text{ and }~~~ \limsup_{k\rightarrow\infty} \left\|\overline x^{k}_{-i(k)} - \overline x^\star_{-i(k)}\right\|^2\le \mathbf{b}.
\end{align*}
This implies that the $\limsup$ of the second term in $\mathbf{a'}^k$ is upper bounded by $ \mathbf{b}$ and so is the maximum over $D$ consecutive times. Thus we have that for any $\varepsilon>0$, there is a $K$ such that for all $k>K$, 
\begin{align}
    & ~~~\mathbf{b}-\varepsilon \leq (1-\beta^p) \max_{0\le d< D}\left\|\overline x^{k+d} - \overline x^\star\right\|^2 + \beta^p( \mathbf{b}-\varepsilon) \nonumber \\
\text{thus }~~~    & ~~~\mathbf{b}- \frac{1+\beta^p}{1-\beta^p}\varepsilon \leq  \max_{0\le d< D}\left\|\overline x^{k+d} - \overline x^\star\right\|^2 \leq \mathbf{b} + \varepsilon \nonumber \\
\text{so }~~~   \label{eq:max_conv}
    & ~~~\max_{0\le d< D}\left\|\overline x^{k+d} - \overline x^\star\right\|^2\rightarrow \mathbf{b}.
\end{align}
This convergence yields in turn that $\|\overline x^{k} - \overline x^\star\|^2 \to \mathbf{b}$; for better readability, we postpone the proof of this fact at the end of this section. 

We have now all the ingredients to establish convergence of $(x^k)$ in the case of multiple minimizers. 
In the proof of Th.~\ref{general_conv_thm} for a unique minimizer (in Sec.~\ref{sec:cv_gen} of the main text), the uniqueness of the minimizer is used only that the last steps. All the previous arguments could be repeated here to 
establish the existence of a subsequence of $(\overline x^k)$ converging to $\overline x$ with $x = \prox_{\gamma g}(\overline x)$ being an optimal point. So let us pick this special optimal point, as $x^\star$ used in the above analysis. Since $\left\| \overline x^k -   \overline x^\star\right\|^2\to\mathbf{b}$, this limit can be only equal to $0$, which directly implies that $x^k\rightarrow x^\star$, and ends the proof.

\medskip

\noindent
\textbf{Proof of the statement that $\|\overline x^{k} - \overline x^\star\|^2 \to \mathbf{b}$.}   

\noindent
We will establish the convergence by contradiction. Let $(n_m)$ be a diverging sequence such that $\|\overline{x}^{n_m}-x^\star\|^2 \leq \mathbf{b}-\varepsilon$  for some $\varepsilon > 0$. From \eqref{eq:max_conv} we have that there also exists a sequence $(l_{m})$ such that $\left\|\overline x^{l_{m}}-x^\star\right\|^2\rightarrow \mathbf{b}$ and $l_{m+1} - l_{m} \le D' < \infty $. Thus, for any $\delta > 0$, there is $K<\infty$  such that for any $k>K, m>K$, 
$$
\left\|x_i^{k}-x_i^\star\right\|^2\le \mathbf{b} + \delta,\quad 
    \left\|\overline{x}^{l_{m}} - x^\star\right\|^2\ge \mathbf{b} - \delta,\quad\text{and}\quad
    \left\|\overline{x}^{n_m} - x^\star\right\|^2\le \mathbf{b} - \varepsilon.
$$
For any moment $n=n_m$ and $l=l_{m}$ fulfilling $l_{m-1}<n_m\le l_{m}$ and $m > K$, denote by $i$ the agent updating at time $n$. Let $u+1$ be the number of updates of $i$ between $n$ and $l$, and let $n=s_0<s_1<\dotsb< s_u\le l$ be the moments of these updates, we get for any $q=1,..,u$ that
\begin{align}
\label{eq:lim_A}
    \left\|\overline x^{s_q}- \overline x^\star\right\|^2&\le \sum_{j=1}^M\pi_j\left\|x_j^{s_q} - x_j^\star\right\|^2
    \le\sum_{j\neq i}^M\pi_j\left\|x_j^{s_q} - x_j^\star\right\|^2 + \pi_i \mathbf{a'}^{s_{q-1}}\nonumber\\
    &\le (1 - \pi_i)\left(\mathbf{b}+\delta\right) + \pi_i\beta^{p-1}\left\|\overline x^{s_{q-1}} - \overline x^\star\right\|^2 + \pi_i\left(1 - \beta^{p-1}\right)\left(\mathbf{b} + \delta\right)\nonumber\\
    &\le \left(1 - \phi \right)(\mathbf{b} + \delta) + \phi \left\|\overline x^{s_{q-1}} - \overline x^\star\right\|^2\nonumber
\end{align}
with $\phi := \pi_i \beta^{p-1}$. Thus, by induction for $q=1,..,u$,  
\begin{align*}
    \left\|\overline x^{s_u}-\overline x^\star\right\|^2&\le (1-\phi^u)(\mathbf{b} + \delta) + \phi^u  \left\|\overline x^{n} - \overline x^\star\right\|^2\\
    &\le (1-\phi^u)(\mathbf{b} + \delta) + \phi^u (\mathbf{b} - \varepsilon) = \mathbf{b} + \delta - \varepsilon \phi^u
\end{align*}
As $u < D'$, we obtain \\
\resizebox{\textwidth}{!}{$\displaystyle \mathbf{b}-\delta\le \left\|\overline x^l-x^\star\right\|^2\!\le \sum_{j\neq i}^M \pi_j\left\|x_j^{l} - x_j^\star\right\|^2\! + \pi_i \mathbf{a'}^{s_u} \le \mathbf{b} + \delta - \varepsilon\beta^{p(u + 1)}\le \mathbf{b} + \delta - \varepsilon \pi_i \phi^{D'}\!\!.$}\\
This yields $\delta \geq \varepsilon \pi_i \phi^{d'}/2>0$ which contradicts the arbitrariness of $\delta$, and then proves that $
    \left\|\overline{x}^k - \overline x^\star\right\|^2\rightarrow \mathbf{b}.$

\smallskip

\section{Proof of the rate of convergence}
\label{apx:rate}
This appendix presents the proof of Theorem \ref{gradient_rates_theorem}. We first introduce some notation and establish a key lemma. 

Pick any $x^\star$ in the set of minimizers of $F$. We are going to bound the maximal sum of three terms $g^k,\ o^k$ and $r^k$ defined as follows as means of quantities over all the machines. For technical reasons, we also need to define $g_{-i}^k,\ o_{-i}^{k}$ and $ r_{-i}^{k}$ for all $i$, as means of the same quantities without $i$-th summand. Specifically,
\begin{align*}
        r^k &\coloneqq\Sum\pi_i\left\|x_i^k - x_i^\star - (\overline x^k - \overline x^\star) \right\|^2,  & r_{-i}^k &\coloneqq \sum_{j\neq i}\pi_j\left\|x_j^k - x_j^\star - (\overline x^k - \overline x^\star) \right\|^2,\\
        g^k &\coloneqq \Sum\omega_i\pi_i \left\|\nabla f_i(z_i^k) - \nabla f_i(x^\star)\right\|^2,  & g_{-i}^k &\coloneqq \sum_{j\neq i}\omega_j\pi_j \left\|\nabla f_j(z_j^k) - \nabla f_j(x^\star)\right\|^2,\\
        o^k &\coloneqq \Sum\pi_i \left\|z_i^k - w_i^k - (x^\star - \overline x^\star)\right\|^2,  & o_{-i}^k &\coloneqq \sum_{j\neq i}\pi_j \left\|z_j^k - w_j^k - (x^\star - \overline x^\star)\right\|^2,
                    \end{align*}
     \begin{align*}   
        s^k &\coloneqq \min_{k' \le k}\left(r^{k'} + g^{k'} + o^{k'}\right), & s_{-i}^k &\coloneqq \min_{k' < k}\left(r_{-i}^{k'} + g_{-i}^{k'} + o_{-i}^{k'}\right),
    \end{align*}
where $z_i^k$ and $w_i^k$ satisfy $x_i^k = z_i^k - \gamma_i \nabla f_i(z_i^k)$ and $z_i^k = \prox_{\gamma g}(w_i^k)$; $\omega_i =  \gamma_i (  2/L_i  - \gamma_i )$. The quantity $s^k$ controls the decrease of the error in the algorithm, as formalized in Lemma~\ref{e_m_lemma}. The others quantities are involved in the following three useful inequalities. Using variance decomposition, we get that 
    \begin{align}
    \label{r_k_lemma}
        \left\|\overline x^k - \overline x^\star \right\|^2 &= \Sum\pi_i \left\|x_i^k - x_i^\star \right\|^2 - r^k,\\
        \left\|\overline x_{-i}^k - \overline x_{-i}^\star \right\|^2 &= (1 - \pi_i)^{-1}\sum_{j\neq i}\pi_j \left\|x_j^k - x_j^\star \right\|^2 - r_{-i}^k.\nonumber
    \end{align}
From the smoothness of the $(f_i)$, we have 
    \begin{align}
    \label{eq:g_k_bound}
        \Sum\pi_i\left\|x_i^k - x_i^\star\right\|^2&\le \Sum\pi_i\left\|z_i^k - x^\star\right\|^2 - g^k,\\
        (1 - \pi_i)^{-1}\sum_{j\neq i}\pi_j\left\|x_j^k - x_j^\star\right\|^2&\le (1 - \pi_i)^{-1}\sum_{j\neq i}\pi_j\left\|z_j^k - x^\star\right\|^2 - g_{-i}^k\nonumber.
    \end{align}
Finally, by~\eqref{eq:z_i}, we also have 
    \begin{align}
    \label{o_k_lemma}
        \Sum\pi_i\left\|z_i^k - x^\star\right\|^2 &\le \Sum\pi_i \mathbf{a}^{k - D_i^K} - o^k,\\
        (1 - \pi_i)^{-1}\sum_{j\neq i}\pi_j\left\|z_j^k - x^\star\right\|^2 &\le (1 - \pi_i)^{-1}\sum_{j\neq i}\pi_j \mathbf{a}^{k - D_j^k} - o_{-i}^k.\nonumber
    \end{align}

\begin{lemma}
\label{e_m_lemma}
For any $k\in[k_m, k_{m+1})$, we have
    \begin{align*}
        \mathbf{b}^m \le \mathbf{b}^{m-2} - s^k.
    \end{align*}
    with $ \mathbf{a}^k$ defined by \eqref{eq:ak} and $\mathbf{b}^m = \max_{k\in [k_m, k_{m+1})} \mathbf{a}^k$ as in the proof of Theorem~\ref{general_conv_thm}.
\end{lemma}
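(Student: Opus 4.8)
The plan is to chain together the three variance/smoothness/firm-nonexpansiveness inequalities~\eqref{r_k_lemma}, \eqref{eq:g_k_bound}, and \eqref{o_k_lemma} to produce a single bound of the form $\mathbf{a}^k \le \max_i \mathbf{a}^{k-D_i^k} - (r^k + g^k + o^k)$, and then to propagate this bound along two epochs using the epoch-sequence definition, exactly as in the proof of Theorem~\ref{thm:strong}, but keeping track of the subtracted nonnegative quantity. First I would fix $k\in[k_m,k_{m+1})$. Combining \eqref{o_k_lemma}, \eqref{eq:g_k_bound}, and \eqref{r_k_lemma} in that order gives
\begin{align*}
\left\|\overline x^k - \overline x^\star\right\|^2 &\le \Sum\pi_i\left\|x_i^k - x_i^\star\right\|^2 - r^k \le \Sum\pi_i\left\|z_i^k - x^\star\right\|^2 - g^k - r^k \\
&\le \Sum\pi_i \mathbf{a}^{k-D_i^k} - o^k - g^k - r^k \le \max_i \mathbf{a}^{k-D_i^k} - (r^k + g^k + o^k),
\end{align*}
using $x_i^k = x_i^{k-d_i^k}$ and $\Sum\pi_i=1$; and the analogous chain with the $-i$ quantities and $\mathbf{a}^{k-D_j^k}$ bounds $\|\overline x_{-i}^k - \overline x_{-i}^\star\|^2$ by $\max_j \mathbf{a}^{k-D_j^k} - (r_{-i}^k + g_{-i}^k + o_{-i}^k)$. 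Since $i(k)$ is the updating agent at time $k$, taking the max of the two bounds yields $\mathbf{a}^k \le \max_i \mathbf{a}^{k-D_i^k} - \min(r^k+g^k+o^k,\ r_{-i(k)}^k+g_{-i(k)}^k+o_{-i(k)}^k)$; since all of $r,g,o$ are nonnegative and the $-i$ versions are sums over a subset, in fact $r_{-i}^k+g_{-i}^k+o_{-i}^k$ is dominated appropriately, so one can cleanly write $\mathbf{a}^k \le \max_i \mathbf{a}^{k-D_i^k} - (r^k+g^k+o^k)$ after the definition of $s_{-i}^k$ absorbs the bookkeeping — this point needs a little care with how $s^k$ versus $s_{-i}^k$ are defined (the strict vs. non-strict minimum).

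Next I would run the epoch telescoping. For $k\in[k_m,k_{m+1})$, the epoch definition gives $k-D_i^k\ge k_{m-1}$, so $\mathbf{a}^k \le \max_{k'\in[k_{m-1},k)}\mathbf{a}^{k'} - (r^k+g^k+o^k)$. Iterating over $k=k_m,k_m+1,\dots$ within the epoch (as done for \eqref{eq:km} in Theorem~\ref{thm:strong}) shows $\mathbf{b}^m=\max_{k\in[k_m,k_{m+1})}\mathbf{a}^k \le \mathbf{b}^{m-1}$, and more precisely that for the specific $k$ achieving the max we lose the corresponding $(r^k+g^k+o^k)$ against a value from epoch $m-1$. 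Going back one further epoch (hence $m-2$ in the statement, which gives the slack needed so that both $r^{k'}+g^{k'}+o^{k'}$ terms appearing — one from the ``$-i$'' side, which uses $s_{-i}^k$ with a strict minimum $k'<k$ — are bounded by quantities already in $\mathbf{b}^{m-2}$), one arrives at $\mathbf{b}^m \le \mathbf{b}^{m-2} - s^k$, recalling $s^k=\min_{k'\le k}(r^{k'}+g^{k'}+o^{k'})$.

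The main obstacle, I expect, is not any single inequality — each of \eqref{r_k_lemma}--\eqref{o_k_lemma} is already supplied — but rather the careful accounting of \emph{which} epoch each subtracted term can be charged against, so that the minimum defining $s^k$ (over all $k'\le k$, including earlier epochs) is genuinely subtractable from $\mathbf{b}^{m-2}$ rather than $\mathbf{b}^{m-1}$. Concretely, the ``$-i$'' inequalities reference $\mathbf{a}^{k-D_j^k}$ for $j\neq i(k)$, and when these are themselves expanded they contribute terms that live one epoch deeper; the factor-of-two gap between $m$ and $m-2$ is exactly the room needed to push the worst such term down without it colliding with the term we already extracted. Making this bookkeeping airtight — in particular justifying why the extracted quantity is $\min_{k'\le k}(r^{k'}+g^{k'}+o^{k'})$ and not merely $r^k+g^k+o^k$ — is where the real work lies; the rest is a routine repetition of the Theorem~\ref{thm:strong} telescoping argument.
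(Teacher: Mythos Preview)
Your overall shape is right --- chain \eqref{r_k_lemma}, \eqref{eq:g_k_bound}, \eqref{o_k_lemma} and then telescope across two epochs --- but the central step is wrong and you have misidentified where the difficulty lies.

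The claim ``one can cleanly write $\mathbf{a}^k \le \max_i \mathbf{a}^{k-D_i^k} - (r^k+g^k+o^k)$'' does not follow. On the $-i(k)$ branch of $\mathbf{a}^k$, the chained inequalities subtract only $r_{-i}^k+g_{-i}^k+o_{-i}^k$, which is \emph{smaller} than $r^k+g^k+o^k$ (the $-i$ sums omit nonnegative terms). Domination goes the wrong way: when you can only subtract the smaller quantity you cannot upgrade it to the larger one for free. Were your inequality true, a single-epoch telescope would already give $\mathbf{b}^m \le \mathbf{b}^{m-1} - s^k$, and the $m-2$ would be unnecessary.

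You also invert the real obstacle. Passing from $r^k+g^k+o^k$ to $s^k=\min_{k'\le k}(r^{k'}+g^{k'}+o^{k'})$ is trivial, since $s^k$ is the smaller of the two and we are subtracting. The hard direction is the opposite: recovering the \emph{full} $s^k$ when the $-i$ branch supplies only the partial $s_{-i}^k$. The mechanism you are missing is this: in the bound for $\|\overline x_{-i}^k - \overline x_{-i}^\star\|^2$, the maximum over $k'=k-D_j^k$ with $j\neq i$ lands only at times $k'$ where the updating worker $j(k')$ is \emph{different} from $i$ (this is the constraint $d_i^{k'}\neq 0$ in the paper's \eqref{eq:b_k_recursion}). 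Substituting the one-step bound \eqref{eq:c_k_grad_recursion} for $\mathbf{a}^{k'}$ back in then produces a second subtracted term $s_{-j(k')}^{k'}$ with $j(k')\neq i$. It is precisely having two \emph{distinct} excluded indices that allows the paper to combine $s_{-i}^k$ and $s_{-j(k')}^{k'}$ into at least $s^k$ (the stated inequality $s^k \le \max(s_{-i}^k, s_{-j}^k)$ for $i\neq j$). This second unrolling --- not generic ``epoch slack'' --- is what forces the $m-2$ and is the step your plan does not capture.
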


\begin{proof}
Combining Eqs.~\eqref{r_k_lemma}, \eqref{eq:g_k_bound}, and \eqref{o_k_lemma}, we get for any $k\in [k_m, k_{m+1})$
\begin{align}
       \label{eq:a_k_recursion}
       \left\|\overline x^k - \overline x^\star\right\|^2 &= \Sum\pi_i \left\|x_i^{k} - x_i^\star\right\|^2 - r^k \le \Sum\pi_i\left\|z_i^k - x^\star\right\|^2 - r^k - g^k \nonumber \\
        &\le \Sum\pi_i \mathbf{a}^{k - D_i^K} - r^k - g^k - o^k  \le \max_{k'\in [k_{m-1}, k)} \mathbf{a}^{k'}  - s^k \le \mathbf{b}^{m-1} - s^k
\end{align}
where the last inequality comes from two facts: (i) for any $k'\in [k_m, k_{m+1})$,  $\mathbf{a}^{k'}\le \mathbf{b}^m $ by definition and (ii) $\mathbf{b}^m \le \mathbf{b}^{m-1}$ (as shown in the proof of Theorem\;\ref{general_conv_thm}).
    
Similarly, if at moment $k$ the update is done by slave $i$, we have
\begin{align}
    \label{eq:b_k_recursion}
      \left\|\overline x_{-i}^k - \overline x_{-i}^\star\right\|^2     
        &\le (1 - \pi_i)^{-1}\sum_{j\neq i}\pi_j \mathbf{a}^{k - D_j^k} - g_{-i}^k - o_{-i}^k - r_{-i}^k \nonumber \\
        &\le \max_{k'\in[k_{m-1}, k_m):\ d_i^{k'}\neq 0} \mathbf{a}^{k'} - s_{-i}^k,
\end{align}
where $d_i^{k'}\neq 0$ comes from the fact that $k' = k - D_j^k$ was an update from a worker $j \neq i$ thus $d_i^k\neq 0$ (recall Fig.~\ref{fig:scheme}).
      
This can be wrapped up as
\begin{align}
    \label{eq:c_k_grad_recursion}
        \mathbf{a}^k  \le \max\left( \mathbf{b}^{m - 1} - s^k, \max_{k'\in[k_{m-1}, k_m):\ d_i^{k'}\neq 0} \mathbf{a}^{k'} - s_{-i}^k\right).
\end{align}
Denote by $j(k')$ the agent who is responsible for the update at moment $k'$. Then, plugging \eqref{eq:c_k_grad_recursion} into \eqref{eq:b_k_recursion} yields 
    \begin{align}
    \label{eq:b_k_grad_recursion}
         \left\|\overline x_{-i}^k - \overline x_{-i}^\star\right\|^2 \le & \max_{k'\in[k_{m-1}, k_m):\ d_i^{k'}\neq 0}\max \Big(  \mathbf{b}^{m-2} - s^{k'} - s_{-i}^k,\nonumber \\
         & ~~~ \max_{k''\in[k_{m-2}, k_{m-1}):\ d_{j(k')}^{k''}\neq 0} \left(\mathbf{a}^{k''} - s_{-j(k')}^{k'} - s_{-i}^k\right)\Big).
    \end{align}

By definition, $(s^k)$ and $(s_{-i}^k)$ are non-negative, non-increasing sequences; furthermore, for any $i$ and $j$ such that $i\neq j$ it holds that $s^k \le \max\left(s_{-i}^k, s_{-j}^k\right)$. Thus, \eqref{eq:b_k_grad_recursion} can be recast as
    \begin{align*}
       \left\|\overline x_{-i}^k - \overline x_{-i}^\star\right\|^2 \le \max\left(\mathbf{b}^{m-2} - s^k, \max_{k''\in[k_{m-2}, k_{m-1}):\ d_{j(k')}^{k''}\neq 0} \mathbf{a}^{k''} - s^k\right) \leq \mathbf{b}^{m-2} - s^k
    \end{align*}
    and finally, since $\mathbf{b}^{m-1}\le \mathbf{b}^{m-2}$, we obtain
    \begin{align*}
        \mathbf{b}^m = \max_{k\in [k_m, k_{m+1})} \mathbf{a}^k \le \max(\mathbf{b}^{m-1} - s^k, \mathbf{b}^{m-2} - s^k) = \mathbf{b}_{m-2} - s^k. 
    \end{align*}
\end{proof}

We are now in position to give the proof of  Theorem~\ref{gradient_rates_theorem}, establishing the rate of convergence of our algorithm.

\begin{proof}\textit{(of Theorem~\ref{gradient_rates_theorem})}
Applying $m/2$ times Lemma \ref{e_m_lemma} and using that $(s^k)$ is non-increasing, we get
\begin{align*}
\mathbf{b}^m \le \mathbf{b}^0 - \frac{m}{2}s^k,
\end{align*}
We deduce       
\begin{align*}       
 s^k \le \frac{2(\mathbf{b}^0 - \mathbf{b}^m)}{m} 
 \leq 2\frac{\max_{k'\in[k_0, k_1)}\mathbf{a}^{k'}}{m}.
\end{align*}
Using that $  \|\overline x^k - \overline x^\star \|^2 \le \max_i\left\|x_i^k - x_i^\star\right\|^2$ and $  \|\overline x_{-i}^k - \overline x_{-i}^\star \|^2 \le \max_{i}\left\|x_i^k - x_i^\star\right\|^2$, we deduce from Lemma \ref{lem:main} that
    \begin{align*}
       & \mathbf{a}^k
        \le \max_i\left\|x_i^k - x_i^\star\right\|^2 
        \le \max_{k'\le k-1} \mathbf{a}^{k'}
        \le\dotsb
        \le \mathbf{a}^0
        \le \max_i\left\|x_i^0 - x_i^\star\right\|^2 \\
        \text{and } &    s^k   \le \frac{2\max_i \left\|x_i^0 - x_i^\star\right\|^2}{m}.
    \end{align*}

On the other hand, we have that $x^k = \prox_{\gamma g}(\overline x^k)$ satisfies $\overline x^k - x^k \in \gamma \partial g(x^k)$ (see e.g.\;\cite[Prop.~16.34]{bauschke2011convex})
We then introduce
    \begin{align*}
        h^k \coloneqq (\overline x^k - x^k)/\gamma + \nabla f(x^k)\in \partial F(x^k).
    \end{align*}

Writing $\overline x^k$ as $\sum_i\pi_i (z_i^k - \gamma_i \nabla f_i(z_i^k))$ and using each $f_i$'s smoothness, we have
\begin{align}\label{eq:boundh}
        \|h^k\|^2 
        &= \left\|\gamma^{-1}\Sum\pi_i \left( z_i^k - x^k \right) -  \Sum\pi_i \left(\nabla f_i(z_i^k) - \nabla f_i(x^k)\right)\right\|^2 \nonumber\\
        &\le \frac{1}{\gamma^2}\Sum\pi_i \left\| \left(z_i^k - x^k \right) -\gamma_i \left(\nabla f_i(z_i^k) - \nabla f_i(x^k)\right)\right\|^2 
        \nonumber\\
        &\le \frac{1}{\gamma^2}\Sum\pi_i \left( \left\| z_i^k - x^k \right\|^2 - \omega_i \left\|\nabla f_i(z_i^k) - \nabla f_i(x^k)\right\|^2 \right) \nonumber\\
        &\le \frac{1}{\gamma^2 \sum_i \gamma_i^{-1}}\Sum \gamma_i^{-1} \left\| z_i^k - x^k \right\|^2.
    \end{align}
    Then, as $        \|a+b+c\|^2 \le \left(1 + \frac{\delta}{2}\right)\|a+b\|^2 + \left(1 + \left(\frac{\delta}{2} \right)^{-1}\right) \|c\|^2 \le \left( 2 + \delta \right) \left( \|a\|^2 + \|b\|^2 \right)$ $  + \left( 1+2\delta^{-1} \right)\|c\|^2
$
   for any $\delta > 0$, we can bound each summand with individual $\delta_i$:
    \begin{align*}
        &\|x^k - z_i^k\|^2
        = \left\|\prox_{\gamma g}(\overline x^k) - \prox_{\gamma g}(w_i^k)\right\|^2
        \le \|\overline x^k - w_i^k\|^2 \\
        &= \left\|\left(\overline x^k - \overline x^\star + x_i^\star - x_i^k\right) - \left(w_i^k - \overline x^\star + x_i^\star - x_i^k\right) \right\|^2 \\
        &= \left\|-\left(x_i^k - x_i^\star - (\overline x^k - \overline x^\star)\right) + \left(z_i^k - w_i^k - (x^\star - \overline x^\star)\right) - \gamma_i\left(\nabla f_i(z_i^k) - \nabla f_i(x^\star)\right)\right\|^2 \\
        &\le \left( 2 + \delta_i \right)\left(\left\|x_i^k - x_i^\star - (\overline x^k - \overline x^\star)\right\|^2 + \left\|z_i^k - w_i^k - (x^\star - \overline x^\star)\right\|^2 \right)\\
        &~~~ + \left(1 + 2\delta_i^{-1} \right) \gamma_i^2\left\|\nabla f_i(z_i^k) - \nabla f_i(x^\star)\right\|^2 .
    \end{align*}

    \begin{align*}
    \textrm{Hence, ~~~}      \Sum\pi_i \|x^k - z_i^k\|^2 \le \max_i\left[\left( 2 + \delta_i \right)\left( r^k + o^k \right) + \left(1 + 2\delta_i^{-1} \right)\frac{\gamma_i^2}{\omega_i}g^k \right]. 
    \end{align*}
    Thus \eqref{eq:boundh} gives
    \begin{align*}
        \min_{k'\le k}\|h^{k'}\|^2
        &\le \gamma^{-2}\min_{k'\le k}\max_i \left( \left( 2 + \delta_i \right) \left( r^{k'} + o^{k'} \right) + \left( 1 + 2\delta_i^{-1} \right)\frac{\gamma^2}{\omega}g^{k'} \right)\\
        &\le \gamma^{-2}\max_i\max\left(2 + \delta_i , \gamma_i^2\omega_i^{-1}\left( 1 + 2\delta_i^{-1} \right) \right) s^k.
    \end{align*}
    Taking $\delta_i = \frac{\gamma_i}{2/L_i - \gamma_i}$ yields
    \begin{align*}
        \min_{k'\le k}\|h^{k'}\|^2
        &\le \gamma^{-2}\max_i\left(2 + \delta_i \right) s^k
        \le \gamma^{-2}\max_i \left( 2 + \frac{\gamma_i}{2/L_i - \gamma_i} \right)\frac{2\max_j \left\|x_i^0 - x_i^\star\right\|^2}{m}\\
        &\le \frac{8\max_j \left\|x_j^0 - x_j^\star\right\|^2}{m \gamma^2 \min_i \left(2 - \gamma_i L_i \right)}.
    \end{align*}
    where at the last step we used our assumption $\gamma_i\in(0, 2/L_i)$.
\end{proof}

\section{Proof of epoch scaling with delays}
\label{apx:delays}
This appendix gives the proof of the results of Proposition\;\ref{prop:scaling} and the following table. 

\medskip
\noindent
\emph{Case of delays uniformly bounded by $d$.} By definition of time, we have $d\geq M$, and then $d=M+\tau$ with $\tau\geq 0$. It is easy to see on the definition of the epoch sequence of Section\,\ref{sec:epoch} that $k_{m+1} - k_m \leq 2d+1$ as $d_i^{k_{m+1}}\leq d$ for all $i$. Then there was a least one update of each machine in $[k_{m+1}-d;k_{k+1}]$. Repeating this reasoning at  $k_{m+1}-d-1$, one gets that two update occured in $[k_{m+1}-2d-1, k_{m+1}]$ hence the result.

\medskip
\noindent
\emph{Case of average delay bounded by $\overline{d}$}. To prove that  $\overline{d} = \frac{M - 1}{2} + \tau$ with $\tau\ge 0$, one can notice that at any time $k$ there can be only one worker with a zero delay (the updating one), only one with a delay equal to 1, and so on. Consequently, the sum of the delays is at least $M(M - 1) / 2$ thus the average is at least $(M - 1) / 2$. 

We now look carefully at the epoch sequence. To simplify notation, we introduce $N := k_{m+1} - k_m$ and $i:=i(k_{m+1})$ the machine updating at moment $k_{m+1}$. We will consider two subcases depending on which worker performed the update at~$k_m$:

\begin{itemize}
    \item \textit{When $i(k_m)=i$}. In this case, there cannot be any other update of $i$ between $k_m$ and $k_{m+1}$. Indeed, by definition of $k_{m+1}$ it is the first moment when every machine has been updated at least twice since moment $k_m$, so for $i$ it has to be the second time (including $k_m$). Therefore,
    \begin{align*}
        \sum_{k=k_m}^{k_{m+1}-1} d_i^k = 0 + 1 + \dotsb + (N - 1) = \frac{N(N-1)}{2}.
    \end{align*}
      
\item \textit{When $i(k_m)\neq i$}. In this case, there is a moment $\widetilde k \in (k_m, k_{m+1})$ such that $i(\widetilde k) = i$. Since $d_i^{k_m} \ge 1$ and for any two numbers $a, b$ we have $a^2 + b^2 \ge \frac{(a+b)^2}{2}$,
    \begin{align*}
        \sum_{k=k_m}^{k_{m+1}-1} d_i^k 
        &= 1 + \dotsb + (\widetilde k - k_m) + 0 + 1 + \dotsb + (k_{m+1} - \widetilde k - 1) \\
        &= \frac{(\widetilde k - k_m)(\widetilde k - k_m + 1)}{2} + \frac{(k_{m+1} - \widetilde k - 1)(k_{m+1} - \widetilde k)}{2}\\
        &= \frac{1}{2}\left(\left(\widetilde k - k_m + \frac{1}{2}\right)^2 + \left(k_{m+1} - \widetilde k - \frac{1}{2} \right)^2 - \frac{1}{2} \right) \\
        &\ge \frac{(k_{m+1} - k_m)^2 - 1}{4} = \frac{N^2 - 1}{4} \geq  \frac{N(N-1)}{4}.
    \end{align*}
\end{itemize}
In both cases, we have
    \begin{align}
        \label{eq:p1}
         \sum_{k=k_m}^{k_{m+1}-1} d_i^k \geq \frac{N(N-1)}{4}.
    \end{align}
In addition, for any moment $k_m + l$ ($l\ge 0$); among workers $j\neq i$, at least $M - 2$ have a delay greater than 0, at least $M - 3$ have a delay greater than 1, etc. 
    \begin{align}
         \label{eq:p2}
        \sum_{j\neq i} d_j^{k_m+l} \ge \sum_{q=0}^{M-2}q = \frac{(M - 1)(M - 2)}{2}.
    \end{align}
Summing \eqref{eq:p1} and \eqref{eq:p2} over $l=0,\dotsc, N - 1$ we obtain
    \begin{align*}
      \sum_{k=k_m}^{k_{m+1} - 1}\sum_{j=1}^M d_j^k \ge N \frac{(M - 1)(M - 2)}{2} + \frac{N(N-1)}{4}.
    \end{align*}
Combining this with the fact that $\sum_{k=k_m}^{k_{m+1} - 1}\sum_{j=1}^M d_j^k \le MN\overline{d}$ for the average bound leads to the result.

\end{document}